\def\p{\partial}
\def\o{\overline}
\def\b{\bar}
\def\mc{\mathcal}
\def\mr{\mathrm}
\def\n{\nabla}
\theoremstyle{plain}
\newtheorem{theorem}{Theorem}[section]
\theoremstyle{plain}
\newtheorem{definition}[theorem]{Definition}
\theoremstyle{plain}
\theoremstyle{remark}
\newtheorem{remark}[theorem]{Remark}
\theoremstyle{remark}
\theoremstyle{remark}
\theoremstyle{plain}
\newtheorem{proposition}[theorem]{Proposition}
\numberwithin{equation}{section}
\theoremstyle{plain}
\theoremstyle{plain}
\newtheorem{corollary}[theorem]{Corollary}
\newtheorem{problem}[theorem]{Problem}
\begin{document}

\title{K\"ahler metrics of negative  holomorphic (bi)sectional curvature on a compact relative K\"ahler fibration}

\author{Xueyuan Wan}
\address{Mathematical Science Research Center, Chongqing University of
Technology, Chongqing 400054, China}
\email{xwan@cqut.edu.cn}

\date{\today}

\begin{abstract}

For a compact relative K\"ahler fibration over a compact K\"ahler manifold with negative holomorphic sectional curvature, if the relative K\"ahler form on each fiber also exhibits negative holomorphic sectional curvature, we can construct K\"ahler metrics with negative holomorphic sectional curvature on the total space. Additionally, if this form induces a Griffiths negative Hermitian metric on the relative tangent bundle, and the base admits a K\"ahler metric with negative holomorphic bisectional curvature, we can also construct K\"ahler metrics with negative holomorphic bisectional curvature on the total space. As an application, for a non-trivial fibration where both the fibers and base have K\"ahler metrics with negative holomorphic bisectional curvature, and the fibers are one-dimensional, we can explicitly construct K\"ahler metrics of negative holomorphic bisectional curvature on the total space, thus resolving a question posed by To and Yeung for the case where the fibers have dimension one.

\end{abstract}

 \subjclass[2020]{32Q05, 32G05, 53C55}  
 \keywords{K\"ahler metrics, negative holomorphic (bi)sectional curvature, positive holomorphic sectional curvature, Griffiths negative, relative tangent bundle, relative K\"ahler fibration}

\maketitle


\section{Introduction}

Negative curvature has long been a significant focus of research in complex geometry. It is well known that any compact complex manifold with negative holomorphic sectional curvature must be Kobayashi hyperbolic. Moreover, when such a manifold possesses a fibration structure, studying its negative curvature properties becomes even more intriguing. In this paper, we investigate the curvature properties of such manifolds. Let \( p: \mathcal{X} \to \mathcal{B} \) be a compact holomorphic fibration, where \( p \) is a proper holomorphic submersion between two compact complex manifolds \( \mathcal{X} \) and \( \mathcal{B} \).

In complex geometry, the notions of holomorphic sectional curvature and holomorphic bisectional curvature are two fundamental concepts (see Definition \ref{defn-bisec}). A natural question arises: under what conditions does the total space \( \mathcal{X} \) admit Hermitian (or K\"ahler) metrics with negative holomorphic (bi)sectional curvature?

The simplest nontrivial compact holomorphic fibration is the Kodaira surface, defined as a smooth compact complex surface admitting a Kodaira fibration \cite{Kodaira+1975+1511+1519}. The study of negative curvature properties on Kodaira surfaces has been rich. Cheung \cite{MR990192} constructed a K\"ahler metric with negative holomorphic sectional curvature, while Tsai \cite{MR1019707} developed a Hermitian metric with negative holomorphic bisectional curvature. By defining a local immersion from a Kodaira surface into the Teichm\"uller space \( \mathcal{T}_{g,1} \), To and Yeung \cite{MR2820140} demonstrated the existence of a K\"ahler metric with negative holomorphic bisectional curvature on the Kodaira surface. Independently, Tsui \cite{Tsui_2006} constructed a K\"ahler metric with negative holomorphic bisectional curvature on certain special Kodaira surfaces using a completely different approach.

For general compact holomorphic fibrations, finding a K\"ahler metric with negative holomorphic bisectional curvature on a general compact holomorphic fibration is a challenging problem. In \cite[Theorem 1]{MR2820140}, To and Yeung successfully proved the existence of such K\"ahler metrics on the Kodaira surface. Subsequently, in \cite[Remark (i)]{MR2820140}, they posed a related question in the higher-dimensional setting:

\begin{problem}\label{pro1}
In general given a non-trivial fibration for which fibers and base are all equipped with K\"ahler metrics of negative holomorphic bisectional curvature, one may ask whether the total space of the fibration admits a K\"ahler metric with a similar curvature property. \cite[Theorem 1]{MR2820140} provides an affirmative example to such a problem.
\end{problem}

In response to this question, we have obtained the following result for relative K\"ahler fibrations, addressing the case where the relative tangent bundle is Griffiths negative (see Definition \ref{G-N} for Griffiths negative vector bundles). A relative K\"ahler fibration is a holomorphic fibration equipped with a relative K\"ahler form, that is, a smooth closed \((1,1)\)-form that is positive along each fiber (see Definition \ref{relative Kahler}).

\begin{theorem}\label{thm2}
Let \( p: \mathcal{X} \to \mathcal{B} \) be a compact relative K\"ahler fibration over a compact K\"ahler manifold \( \mathcal{B} \) with negative holomorphic bisectional curvature. Suppose the relative K\"ahler form induces a Griffiths negative Hermitian metric on the relative tangent bundle \( T_{\mathcal{X}/\mathcal{B}} \). Then there exist K\"ahler metrics on \( \mathcal{X} \) with negative holomorphic bisectional curvature.
\end{theorem}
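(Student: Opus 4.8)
The plan is to construct the K\"ahler metric on $\mathcal{X}$ as a warped combination of the relative K\"ahler form and a pullback of a base metric, then compute the holomorphic bisectional curvature and control all terms using the two negativity hypotheses. Let me think about how the pieces fit together.

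Let me denote the relative K\"ahler form by $\rho$ (a closed $(1,1)$-form on $\mathcal{X}$, positive along fibers), and let $\omega_{\mathcal{B}}$ be the K\"ahler metric on $\mathcal{B}$ with negative holomorphic bisectional curvature. The natural candidate metric is
$$\omega_\lambda = \rho + \lambda\, p^*\omega_{\mathcal{B}},$$
for a large parameter $\lambda > 0$. Since $\rho$ is positive along fibers and $p^*\omega_{\mathcal{B}}$ is positive in horizontal directions, for $\lambda$ large this is a genuine K\"ahler form on the compact $\mathcal{X}$ (closedness is immediate; positivity follows from a compactness/continuity argument).

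The curvature computation splits according to the horizontal–vertical decomposition of $T\mathcal{X}$ induced by $\rho$. In vertical (fiber) directions the restriction of $\omega_\lambda$ is just $\rho|_{\text{fiber}}$, whose holomorphic bisectional curvature is controlled by the Griffiths negativity of the induced metric on $T_{\mathcal{X}/\mathcal{B}}$ — indeed Griffiths negativity is exactly the statement that the curvature of $(T_{\mathcal{X}/\mathcal{B}}, h)$ is negative in the bisectional sense, which is strictly stronger than negative holomorphic sectional curvature along fibers. In horizontal directions the leading behavior is governed by $\lambda\, p^*\omega_{\mathcal{B}}$, so the horizontal holomorphic bisectional curvature inherits the negativity of $\omega_{\mathcal{B}}$ once $\lambda$ dominates. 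The genuinely delicate terms are the mixed horizontal–vertical bisectional curvatures and the cross terms coming from the second fundamental form / Kodaira–Spencer map of the fibration.

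The heart of the argument is therefore a curvature estimate of the following shape: for the bisectional curvature $R(X,X,Y,Y)$ with $X,Y$ decomposed into horizontal and vertical parts, one collects the vertical–vertical contribution (negative by Griffiths negativity), the horizontal–horizontal contribution (negative and of order $\lambda$ after rescaling), and the mixed terms (which are of lower order in $\lambda$ and involve the variation of the fiber metric, i.e. the geodesic curvature / Kodaira--Spencer tensor). One then shows that by choosing $\lambda$ sufficiently large, the two definite negative blocks dominate the indefinite mixed block via a Cauchy--Schwarz or completion-of-squares estimate, uniformly over the compact $\mathcal{X}$ and over all directions. The main obstacle I expect is precisely this control of the mixed terms: unlike the holomorphic sectional curvature case (Theorem~\ref{thm:HSC}), where one only tests curvature against a single vector and can absorb cross terms more easily, the bisectional curvature involves two independent directions, so the off-diagonal blocks must be dominated as a genuine quadratic form rather than pointwise. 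This is where the strength of Griffiths negativity — as opposed to mere negative holomorphic sectional curvature on fibers — becomes essential, since it supplies a negative-definite vertical block that can absorb the cross terms; a compactness argument then yields a uniform threshold $\lambda_0$ beyond which $\omega_\lambda$ has negative holomorphic bisectional curvature everywhere.
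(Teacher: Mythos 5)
Your proposal is correct and follows essentially the same route as the paper: the metric $k\,p^*\omega_{\mathcal{B}}+\omega_{\mathcal{X}}$ with $k$ large, the horizontal--vertical decomposition induced by the relative K\"ahler form, and the use of Griffiths negativity of $(T_{\mathcal{X}/\mathcal{B}},\phi_{i\bar j})$ as a negative-definite block that absorbs the mixed horizontal--vertical terms via a completion-of-squares (discriminant) estimate made uniform by compactness. The paper implements exactly this via Proposition \ref{prop2}, the inequality \eqref{equ7}, and a Young-inequality case analysis.
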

\begin{remark}
In fact, the assumption that the relative tangent bundle is Griffiths negative already implies that the family is effectively parametrized. Moreover, if the total space is a product or a holomorphic fiber bundle, then it cannot admit any K\"ahler metric with negative holomorphic bisectional curvature; see \cite[Theorem]{MR436056}.
\end{remark}

In particular, when the fibre is one-dimensional, and the family is effectively parametrized (that is, the Kodaira-Spencer map is injective at every point), the relative tangent bundle $T_{\mathcal{X}/\mathcal{B}}$ is automatically Griffiths negative.  Indeed, by a result of Schumacher \cite[Main Theorem]{MR2969273}, the relative canonical bundle $K_{\mathcal{X}/\mathcal{B}}$ is then a positive line bundle, and hence its dual $T_{\mathcal{X}/\mathcal{B}}$ carries a Griffiths negative Hermitian metric.  Consequently,  by using Theorem \ref{thm2} in this setting, we can resolve Problem~\ref{pro1} by explicitly constructing a family of K\"ahler metrics exhibiting negative holomorphic bisectional curvature.
The specific result is as follows:

\begin{corollary}\label{cor1}
Let \( p: \mathcal{X} \to \mathcal{B} \) be a compact holomorphic fibration over a compact K\"ahler manifold \( \mathcal{B} \) with negative holomorphic bisectional curvature. Suppose the fibration is a holomorphic family of compact Riemann surfaces of genus \( \geq 2 \) and is effectively parametrized. Then there exist K\"ahler metrics on \( \mathcal{X} \) with negative holomorphic bisectional curvature.
\end{corollary}

\begin{remark}
For the Kodaira surface, defined as a smooth compact complex surface \( \mathcal{X} \) that admits a Kodaira fibration, there exists a connected fibration \( p: \mathcal{X} \rightarrow \mathcal{B} \) over a smooth compact Riemann surface \( \mathcal{B} \), where \( p \) is a proper holomorphic submersion and the associated Kodaira-Spencer map is injective at each point (see \cite{MR2820140}). By \cite[Application]{MR2969273}, the genus of \( \mathcal{B} \) is \( \geq 2 \), and so \( \mathcal{B} \) is a negatively curved Riemann surface. Based on the above corollary, we can explicitly construct a smooth family of K\"ahler metrics \( \Omega = k(p^* \omega_{\mathcal{B}}) + \omega_{\mathcal{X}} \) on the Kodaira surface with negative holomorphic bisectional curvature for sufficiently large \( k \). Our approach differs notably from the method in \cite{MR2820140}.
\end{remark}

On the other hand, if both the base and the fiber admit Hermitian metrics with negative holomorphic sectional curvature, Cheung \cite[Theorem 1]{MR990192} proved that the entire compact holomorphic fibration also admits a Hermitian metric with negative holomorphic sectional curvature. In \cite{MR4100006}, Chaturvedi and Heier demonstrated that Cheung's metric exhibits positive holomorphic sectional curvature when both the base and fibers have positive holomorphic sectional curvature.
 Consequently, it is natural to consider the existence of K\"ahler metrics with negative (resp. positive) holomorphic sectional curvature on such fibrations. Regarding this question, we have obtained the following result for compact relative K\"ahler fibrations. 
\begin{theorem}\label{thm:HSC}
Let \( p: \mathcal{X} \to \mathcal{B} \) be a compact relative K\"ahler fibration such that the restriction of the relative K\"ahler form to each fiber has negative (resp. positive) holomorphic sectional curvature. If the base manifold \( \mathcal{B} \) also admits a K\"ahler metric of negative (resp. positive) holomorphic sectional curvature, then there exist K\"ahler metrics on \( \mathcal{X} \) with negative (resp. positive) holomorphic sectional curvature.
\end{theorem}
\begin{remark}
Note that Cheung \cite[Theorem 1]{MR990192} also defined a Hermitian metric and proved that it has negative holomorphic sectional curvature. However, there is a fundamental difference between Cheung's metric \eqref{Cheung metric} and our metric \eqref{Kahler metrics}. Specifically, even in the case of a relative K\"ahler fibration, Cheung's metric need not be K\"ahler. For further details on the distinction between these two metrics, refer to Remark \ref{rem-difference}.
\end{remark}
\begin{remark}
The study of K\"ahler manifolds with positive holomorphic sectional curvature has produced a rich collection of examples. For instance, in~\cite{MR4033928}, Yang and Zheng generalized Hitchin's construction and proved that every Hirzebruch manifold $M_{n, k} = \mathbb{P}(H^k \oplus \mathcal{O}_{\mathbb{C P}^{n-1}})$ admits a K\"ahler metric of positive holomorphic sectional curvature in each of its K\"ahler classes.
\end{remark}

This article is organized as follows:
 In Section \ref{sec:preli}, we will review the definitions of Griffiths negative vector bundles and the negativity of holomorphic (bi)sectional curvature. Section \ref{sec:cur} focuses on constructing a family of K\"ahler metrics on the total space and analyzing its curvature properties. In this section, we will prove Theorems \ref{thm:HSC} and \ref{thm2}. Finally, in Section \ref{sec:app}, we will explore a special case of relative K\"ahler fibrations, specifically holomorphic families of canonically polarized manifolds, and we will prove Corollary \ref{cor1}.

\

\text{{\bfseries{Acknowledgments.}}} The author would like to thank Ya Deng, Xu Wang, and Bo Yang for many helpful discussions. The author is especially grateful to Bo Yang for drawing attention to~\cite{MR4100006, MR4852033} and for suggesting the consideration of the case of positive holomorphic sectional curvature.
 Xueyuan Wan is sponsored by the National Key R\&D Program of China (Grant No. 2024YFA1013200) and the Natural Science Foundation of Chongqing, China (Grant No. CSTB2024NSCQ-LZX0040, CSTB2023NSCQ-LZX0042).

\

\section{Griffiths negativity and holomorphic (bi)sectional curvature}\label{sec:preli}

This section will review the definitions of the Chern connection and its curvature for a Hermitian holomorphic vector bundle. One can refer to \cite[Chapter 1]{Kobayashi+1987} for more details. Throughout this paper, we will adopt the Einstein summation convention.

Let $\pi:(E,h^E)\to X$ be a Hermitian holomorphic vector bundle over a complex manifold $X$, where $\mr{rank} E=r$ and $\dim X=n$. The Chern connection of $(E,h^E)$, denoted by $\n^E$, preserves the metric $h^E$ and is of $(1,0)$-type. Given a local holomorphic frame $\{e_i\}_{1\leq i\leq r}$ of $E$, the connection satisfies
\[
  \n^E e_i = \theta^j_i e_j,
\]
where $\theta = (\theta^j_i)$ represents the connection form of $\n^E$. More precisely,
\[
  \theta^j_i = \partial h_{i\b{k}} h^{\b{k}j},
\]
where $h_{i\b{k}} := h(e_i, e_k)$. The Chern curvature of $(E, h^E)$ is denoted by $R^E = (\n^E)^2 \in A^{1,1}(X, \mr{End}(E))$, and can be written as
\[
  R^E = R^j_i e_j \otimes e^i \in A^{1,1}(X, \mr{End}(E)),
\]
where $R = (R^j_i)$ is the curvature matrix, whose entries are $(1,1)$-forms. Here, $\{e^i\}_{1\leq i\leq r}$ represents the dual frame of $\{e_i\}_{1\leq i\leq r}$. The curvature matrix $R = (R^j_i)$ is given by
\[
  R^j_i = d\theta^j_i + \theta^j_k \wedge \theta^k_i = \b{\partial} \theta^j_i.
\]

Let $\{z^\alpha\}_{1\leq \alpha \leq n}$ be local holomorphic coordinates of $X$. The components of the curvature can be expressed as
\[
  R^j_i = R^j_{i\alpha\b{\beta}} dz^\alpha \wedge d\b{z}^\beta,
\]
with
\[
  R_{i\b{j}} := R^k_i h_{k\b{j}} = R_{i\b{j}\alpha\b{\beta}} dz^\alpha \wedge d\b{z}^\beta.
\]
Thus, we have
\[
  R_{i\b{j}\alpha\b{\beta}} = R^k_{i\alpha\b{\beta}} h_{k\b{j}} = -\partial_\alpha \partial_{\b{\beta}} h_{i\b{j}} + h^{\b{l}k} \partial_\alpha h_{i\b{l}} \partial_{\b{\beta}} h_{k\b{j}},
\]
where $\partial_\alpha := \partial/\partial z^\alpha$ and $\partial_{\b{\beta}} := \partial/\partial\b{z}^\beta$.

Griffiths positivity and negativity are defined as follows:
\begin{definition}\label{G-N}
A Hermitian holomorphic vector bundle $\pi: (E,h^E) \to X$ is said to be Griffiths positive (resp. negative) if 
\[
   R_{i\b{j}\alpha\b{\beta}} v^i \b{v}^j \xi^\alpha \b{\xi}^\beta > 0 \quad (\text{resp.} < 0)
\]
for any non-zero elements $v = v^i e_i \in E|_z$ and $\xi = \xi^\alpha \frac{\partial}{\partial z^\alpha} \in T_X|_z$ at any point $z \in X$.
\end{definition}

In particular, when $E = T_X$, the holomorphic tangent bundle of $X$, the holomorphic (bi)sectional curvatures are defined as follows, see also \cite{MR227901}.
\begin{definition}\label{defn-bisec}
For any two non-zero $(1,0)$-type tangent vectors $\xi = \xi^\alpha \frac{\partial}{\partial z^\alpha}$ and $\eta = \eta^\alpha \frac{\partial}{\partial z^\alpha}$, the holomorphic sectional curvature along the direction $\xi$ is given by
\[
  H(\xi) := \frac{R_{\xi\b{\xi}\xi\b{\xi}}}{\|\xi\|^4} = \frac{R_{\gamma\b{\delta}\alpha\b{\beta}} \xi^\alpha \b{\xi}^\beta \xi^\gamma \b{\xi}^\delta}{(h_{\alpha\b{\beta}} \xi^\alpha \b{\xi}^\beta)^2}.
\]
The holomorphic bisectional curvature along $\xi$ and $\eta$ is
\[
  H(\xi, \eta) := \frac{R_{\xi\b{\xi}\eta\b{\eta}}}{\|\xi\|^2 \|\eta\|^2} = \frac{R_{\gamma\b{\delta}\alpha\b{\beta}}  \eta^\gamma \b{\eta}^\delta \xi^\alpha \b{\xi}^\beta}{(h_{\alpha\b{\beta}} \xi^\alpha \b{\xi}^\beta) (h_{\gamma\b{\delta}} \eta^\gamma \b{\eta}^\delta)}.
\]
The holomorphic sectional curvature is said to be negative (respectively, positive) if
\[
H(\xi)<0 \quad (\text{respectively, } H(\xi)>0)
\]
for every nonzero tangent vector $\xi$.  
Similarly, the holomorphic bisectional curvature is said to be negative (respectively, positive) if
\[
H(\xi,\eta)<0 \quad (\text{respectively, } H(\xi,\eta)>0)
\]
for all nonzero tangent vectors $\xi$ and $\eta$.
\end{definition}

\section{Curvature of K\"ahler metrics on a fibration}\label{sec:cur}

 This section will prove the negativity of holomorphic sectional curvature and holomorphic bisectional curvature for a compact relative K\"ahler fibration.
 
\subsection{Definition of K\"ahler metrics}

Let \( p: \mathcal{X} \to \mathcal{B} \) be a holomorphic fibration with compact fibers, that is, a proper holomorphic submersion between two compact complex manifolds \( \mathcal{X} \) and \( \mathcal{B} \). Let \( (z, v) = (z^1, \dots, z^m, v^1, \dots, v^n) \) denote the local holomorphic coordinates of the total space \( \mathcal{X} \), where \( p(z, v) = z \). Here, \( z = (z^\alpha)_{1 \leq \alpha \leq m} \), $m=\dim\mc{B}$, represents the local holomorphic coordinates on \( \mathcal{B} \), and \( v = (v^i)_{1 \leq i \leq n} \), $n=\dim X_z$ ($X_z:=p^{-1}(z)$), represents the local holomorphic coordinates on the fibers. Denote by $T_{\mc{X}}$ and $T_{\mc{B}}$ the holomorphic tangent bundle of $\mc{X}$ and $\mc{B}$, respectively. 

First, we will define a relative K\"ahler fibration. One can refer to \cite[Section 1]{MR4631050} for more details.
\begin{definition}\label{relative Kahler}
 We call a holomorphic fibration $p:\mathcal{X} \rightarrow \mathcal{B}$  a relative K\"ahler fibration if there exists a real, smooth, $d$-closed (1,1)-form $\omega_{\mc{X}}$ on $\mc{X}$ such that $\omega_{\mc{X}}|_{X_z}$ is positive for any $z\in \mc{B}$.
\end{definition}
\begin{remark}
	It is worth noting that Li~\cite[Theorem 1.1]{MR4852033} characterizes the existence of a K\"ahler metric on $\mathcal{X}$. This is equivalent to the base being a compact K\"ahler manifold and the existence of a class $[Q] \in H^2(\mathcal{X}, \mathbb{R})$ whose restriction to each fiber is a K\"ahler class.
\end{remark}

Now we assume that $p:(\mc{X},\omega_{\mc{X}})\to \mc{B}$ is a relative K\"ahler fibration. By $\b{\p}$-Poincar\'e Lemma, there exists a local weight, say $\phi$, such that
\begin{equation*}
  \omega_{\mc{X}}=\sqrt{-1}\p\b{\p}\phi.
\end{equation*}
By utilizing this relative K\"ahler form, we can obtain a canonical horizontal-vertical decomposition of the holomorphic tangent bundle of $\mc{X}$. See for example \cite[Section 1]{MR3955542}, such horizontal lifts (and related curvature computations) go back to Siu's work on the Weil--Petersson metric \cite{MR859202}.
With respect to \( \phi \), the canonical lift of \( \frac{\partial}{\partial z^\alpha} \) is given by
\[
\tfrac{\delta}{\delta z^\alpha} := \tfrac{\partial}{\partial z^\alpha} - \phi_{\alpha \bar{j}} \phi^{i \bar{j}} \tfrac{\partial}{\partial v^i}.
\]
Here  \( (\phi^{i \bar{j}}) \) denotes the inverse matrix of \( (\phi_{i \bar{j}}) \).
 Similarly, we define
\[
\delta v^i = dv^i + \phi^{i \bar{j}} \phi_{\bar{j} \alpha} dz^\alpha.
\]

It can be verified that
 \[ \mathcal{H} = \mathrm{Span}_{\mathbb{C}} \left\{ \tfrac{\delta}{\delta z^\alpha} \right\} \] forms a horizontal subbundle of \( T_\mathcal{X} \). The holomorphic vertical subbundle is \[ \mathcal{V} = \mathrm{Span}_{\mathbb{C}} \left\{ \tfrac{\partial}{\partial v^i} \right\} =\ker \{p_*:T_\mc{X}\to T_\mc{B}\}\] which corresponds to the relative tangent bundle \( T_{\mathcal{X}/\mathcal{B}} \). Equivalently, the horizontal subbundle can be defined intrinsically by
$$
\mc{H}:=\left\{\zeta \in T_\mc{X} \mid \omega_{\mc{X}}(\zeta, \bar{\nu})=0 \text { for all } \nu \in \mc{V}\right\}
$$
so that 
$$ T_\mc{X}=\mc{H}\oplus \mc{V}$$ globally and, in particular, $ \mathcal{H} $ and $\mc{V}$ are $\omega_\mc{X}$-orthogonal.

The local frame \( \left\{ \tfrac{\delta}{\delta z^\alpha}, \tfrac{\partial}{\partial v^i} \right\} \) of \( T_\mathcal{X} \) is dual to \( \{ dz^\alpha, \delta v^i \} \). Furthermore, we have the following decomposition
\[
\omega_{\mathcal{X}} = \sqrt{-1} \partial \bar{\partial} \phi = c(\phi) + \sqrt{-1} \phi_{i \bar{j}} \delta v^i \wedge \delta \bar{v}^j,
\]
where \( c(\phi) \) is the geodesic curvature form given by
\[
c(\phi) = \sqrt{-1} c(\phi)_{\alpha \bar{\beta}} dz^\alpha \wedge d\bar{z}^\beta, \quad c(\phi)_{\alpha \bar{\beta}} = \phi_{\alpha \bar{\beta}} - \phi_{\alpha \bar{j}} \phi^{\bar{j} i} \phi_{i \bar{\beta}}.
\]

Let \( \omega_{\mathcal{B}} = \sqrt{-1} \psi_{\alpha \bar{\beta}} dz^\alpha \wedge d\bar{z}^\beta \) be a K\"ahler metric on the base manifold $\mc{B}$. We define
\begin{equation}\label{Kahler metrics}
  \Omega = k(p^* \omega_{\mathcal{B}}) + \omega_{\mathcal{X}},
\end{equation}
which is a K\"ahler metric on the total space \( \mathcal{X} \) for large $k$.
 In terms of local coordinates, we have
\[
\Omega = \sqrt{-1} \Omega_{\alpha \bar{\beta}} dz^\alpha \wedge d\bar{z}^\beta + \sqrt{-1} \phi_{i \bar{j}} \delta v^i \wedge \delta \bar{v}^j,
\]
where
\[
\Omega_{\alpha \bar{\beta}} = k \psi_{\alpha \bar{\beta}} + c(\phi)_{\alpha \bar{\beta}}.
\]
With respect to the splitting $T_\mc{X}=\mc{H}\oplus \mc{V}$, the K\"ahler form $\Omega$ induces Hermitian metrics on $\mc{H}$ and $\mc{V}$, represented locally by the matrices $(\Omega_{\alpha\bar\beta})$ and $(\phi_{i\bar j})$, respectively.
\begin{remark}\label{rem-difference}
For a general proper holomorphic submersion \(\pi: X \to Y\), Cheung \cite[Section 5]{MR990192} defined a Hermitian metric \(\Psi_\lambda\) on the total space \(X\) by the expression
\begin{equation}\label{Cheung metric}
\Psi_\lambda = \lambda(\pi^*\omega_Y) + \Phi,
\end{equation}
and proved that \(\Psi_\lambda\) has negative holomorphic sectional curvature if both \(\omega_Y\) and \(\Phi|_{\text{fiber}}\) have negative holomorphic sectional curvature. Here, \(\Phi\) is a Hermitian \((1,1)\)-form on \(X\) defined by
\[
\Phi\left(Z_1, Z_2\right)(\mathbf{p}) \equiv \varphi_t\left(\operatorname{proj}_{\tilde{\mathbf{G}}} Z_1(\mathbf{p}), \operatorname{proj}_{\tilde{\mathbf{G}}} Z_2(\mathbf{p})\right),
\]
where \(\mathbf{p} \in \pi^{-1}(t)\) and \(\operatorname{proj}_{\tilde{\mathbf{G}}}\) is the projection onto the fiber direction with respect to some Hermitian metric \(\tilde{\mathbf{G}}\) on \(X\). It is important to note that \(\Phi\) is positive along the fiber direction and degenerates along the horizontal direction. Consequently, \(\Phi\) need not be \(d\)-closed, even in the case of a relative K\"ahler fibration. Therefore, \(\Psi_\lambda\) may not be K\"ahler, even in the context of a relative K\"ahler fibration. This highlights a key difference between the metric defined in \eqref{Cheung metric} and our metric defined in \eqref{Kahler metrics}.

\end{remark}

\subsection{Negativity/Positivity of holomorphic sectional curvature}

In this subsection, we will prove the K\"ahler metrics in \eqref{Kahler metrics} have negative/positive holomorphic sectional curvature and prove Theorem \ref{thm:HSC}.

Let  $\nabla = \nabla^{1,0} + \bar{\partial}$  denote the Chern connection associated with  $\Omega$, and define the Chern curvature as
\begin{equation*}
  R=\n^2=\n^{1,0}\circ \b{\p}+\b{\p}\circ \n^{1,0}\in A^{1,1}(\mc{X}, \mr{End}(T_{\mc{X}})).
\end{equation*}
Then we have the following expressions
\begin{align}\label{equ8}
\begin{split}
  \n^{1,0}\tfrac{\delta}{\delta z^\alpha}&=  \left\langle\n^{1,0}\tfrac{\delta}{\delta z^\alpha},\tfrac{\delta}{\delta z^\beta} \right\rangle\Omega^{\b{\beta}\gamma}\tfrac{\delta}{\delta z^\gamma}+ \left\langle\n^{1,0}\tfrac{\delta}{\delta z^\alpha},\tfrac{\p}{\p v^j}\right\rangle \phi^{\b{j}i}\tfrac{\p}{\p v^i}\\
&  =\p\Omega_{\alpha\b{\beta}}\Omega^{\b{\beta}\gamma}\tfrac{\delta}{\delta z^\gamma}
 \end{split}
\end{align}
and 
\begin{align}\label{equ9}
\begin{split}
  \n^{1,0}\tfrac{\p}{\p v^i}&=\left\langle \n^{1,0}\tfrac{\p}{\p v^i},\tfrac{\delta}{\delta z^\beta}\right\rangle \Omega^{\b{\beta}\alpha}\tfrac{\delta}{\delta z^\alpha}+\left\langle \n^{1,0}\tfrac{\p}{\p v^i},\tfrac{\p}{\p v^j}\right\rangle\phi^{\b{j}k}\tfrac{\p}{\p v^k}\\
  &=\p(\phi_{\b{\beta}k}\phi^{k\b{j}})\phi_{i\b{j}}\Omega^{\b{\beta}\alpha}\tfrac{\delta}{\delta z^\alpha}+\p\phi_{i\b{j}}\phi^{\b{j}k}\tfrac{\p}{\p v^k}.
 \end{split}
\end{align}
Here we used the $\Omega$-orthogonality of the decomposition $T_\mc{X}=\mc{H}\oplus \mc{V}$, namely $\bigl\langle \delta/\delta z^\alpha,\ \partial/\partial v^j\bigr\rangle=0$.

From \eqref{equ8} and \eqref{equ9}, the Chern curvature  $R$  of  $\Omega$  can be expressed as
\begin{align*}
\begin{split}
  R\tfrac{\delta}{\delta z^\alpha}&=(\n^{1,0}\circ \b{\p}+\b{\p}\circ \n^{1,0})(\tfrac{\delta}{\delta z^\alpha})\\
  &=\n^{1,0}(\b{\p}(-\phi_{\alpha\b{j}}\phi^{\b{j}i})\tfrac{\p}{\p v^i})+\b{\p}\left(\p\Omega_{\alpha\b{\beta}}\Omega^{\b{\beta}\gamma}\tfrac{\delta}{\delta z^\gamma}\right)\\
  &=\left[\b{\p}\left(\p\Omega_{\alpha\b{\beta}}\Omega^{\b{\beta}\gamma}\right)+\b{\p}(\phi_{\alpha\b{j}}\phi^{\b{j}i})\wedge \p(\phi_{\b{\beta}k}\phi^{k\b{l}})\phi_{i\b{l}}\Omega^{\b{\beta}\gamma}\right]\tfrac{\delta}{\delta z^\gamma}\\
  &\quad +\left[-\p\b{\p}(\phi_{\alpha\b{j}}\phi^{\b{j}i})+\b{\p}(\phi_{\alpha\b{j}}\phi^{\b{j}k})\wedge \p\phi_{k\b{j}}\phi^{\b{j}i}\right.\\
  &\quad\left.+\p\Omega_{\alpha\b{\beta}}\Omega^{\b{\beta}\gamma}\wedge \b{\p}(\phi_{\gamma\b{j}}\phi^{\b{j}i})\right]\tfrac{\p}{\p v^i}
 \end{split}
\end{align*}
and 
\begin{align*}
\begin{split}
  R\tfrac{\p}{\p v^i}&=\b{\p}\circ \n^{1,0}(\tfrac{\p}{\p v^i})\\
  &=\b{\p}\left[\p(\phi_{\b{\beta}k}\phi^{k\b{j}})\phi_{i\b{j}}\Omega^{\b{\beta}\alpha}\tfrac{\delta}{\delta z^\alpha}+\p\phi_{i\b{j}}\phi^{\b{j}k}\tfrac{\p}{\p v^k}\right]\\
  &=\b{\p}\left(\p(\phi_{\b{\beta}k}\phi^{k\b{j}})\phi_{i\b{j}}\Omega^{\b{\beta}\alpha}\right)\tfrac{\delta}{\delta z^\alpha}\\
  &\quad+\left[\b{\p}(\p\phi_{i\b{j}}\phi^{\b{j}k})+\p(\phi_{\b{\beta}s}\phi^{s\b{j}})\phi_{i\b{j}}\Omega^{\b{\beta}\alpha}\wedge\b{\p}(\phi_{\alpha\b{l}}\phi^{\b{l}k})\right]\tfrac{\p}{\p v^k}.
 \end{split}
\end{align*}
By taking the inner product of  $R \frac{\delta}{\delta z^\alpha}$  and  $R \frac{\partial}{\partial v^i} $ with  $\frac{\delta}{\delta z^\beta}$  and  $\frac{\partial}{\partial v^j}$, we obtain the following proposition.
\begin{proposition}\label{prop1}
	We have
	\begin{align}
  \left\langle  R\tfrac{\delta}{\delta z^\alpha},\tfrac{\delta}{\delta z^\beta}\right\rangle&=\left\langle R^{\mc{H}}\tfrac{\delta}{\delta z^\alpha},\tfrac{\delta}{\delta z^\beta}\right\rangle+\b{\p}(\phi_{\alpha\b{j}}\phi^{\b{j}i})\wedge \p(\phi_{\b{\beta}k}\phi^{k\b{l}})\phi_{i\b{l}}.\label{equ10}\\
  \left\langle R\tfrac{\p}{\p v^i},\tfrac{\p}{\p v^j}\right\rangle &=\left\langle R^\mc{V}\tfrac{\p}{\p v^i},\tfrac{\p}{\p v^j}\right\rangle+\p(\phi_{\b{\beta}s}\phi^{s\b{q}})\phi_{i\b{q}}\Omega^{\b{\beta}\alpha}\wedge\b{\p}(\phi_{\alpha\b{l}}\phi^{\b{l}k})\phi_{k\b{j}}.\label{equ11}\\
  \left\langle R\tfrac{\delta}{\delta z^\alpha},\tfrac{\p}{\p v^l}\right\rangle&=\left[-\p\b{\p}(\phi_{\alpha\b{j}}\phi^{\b{j}i})+\b{\p}(\phi_{\alpha\b{j}}\phi^{\b{j}k})\wedge \p\phi_{k\b{j}}\phi^{\b{j}i}\right.\label{equ12}\\
  &\left.\quad\,+\p\Omega_{\alpha\b{\beta}}\Omega^{\b{\beta}\gamma}\wedge \b{\p}(\phi_{\gamma\b{j}}\phi^{\b{j}i})\right]\phi_{i\b{l}}.\nonumber
\end{align}
Here, $R^\mc{H}$ denotes the Chern curvature of the Hermitian vector bundle $(\mc{H},(\Omega_{\alpha\b{\beta}}))$, and $R^\mc{V}$ is the Chern curvature of the Hermitian vector bundle $(\mc{V},(\phi_{i\b{j}}))$.
\end{proposition}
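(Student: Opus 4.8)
The starting point is that the two displayed formulas for $R\tfrac{\delta}{\delta z^\alpha}$ and $R\tfrac{\p}{\p v^i}$ already express each curvature operator in the horizontal--vertical frame $\{\tfrac{\delta}{\delta z^\gamma},\tfrac{\p}{\p v^i}\}$. The crucial structural fact I would exploit is that, with respect to this frame, the metric is block diagonal,
\[
\Omega=\sqrt{-1}\,\Omega_{\alpha\b{\beta}}\,dz^\alpha\wedge d\b{z}^\beta+\sqrt{-1}\,\phi_{i\b{j}}\,\delta v^i\wedge\delta\b{v}^j,
\]
so that $\langle\tfrac{\delta}{\delta z^\gamma},\tfrac{\p}{\p v^j}\rangle=0$. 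Hence in $\langle R\tfrac{\delta}{\delta z^\alpha},\tfrac{\delta}{\delta z^\beta}\rangle$ only the $\tfrac{\delta}{\delta z^\gamma}$-component of $R\tfrac{\delta}{\delta z^\alpha}$ survives the pairing, in $\langle R\tfrac{\p}{\p v^i},\tfrac{\p}{\p v^j}\rangle$ only the $\tfrac{\p}{\p v^k}$-component of $R\tfrac{\p}{\p v^i}$ survives, and in the mixed pairing only the $\tfrac{\p}{\p v^i}$-component does. Thus each of the three identities reduces to contracting one already-computed coefficient against a single frame covector.

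For \eqref{equ10} I would contract the $\tfrac{\delta}{\delta z^\gamma}$-coefficient of $R\tfrac{\delta}{\delta z^\alpha}$ with $\Omega_{\gamma\b{\beta}}$. In the second summand the horizontal inverse metric $\Omega^{\b{\beta}\gamma}$ appearing in the coefficient pairs with $\Omega_{\gamma\b{\beta}}$ to a Kronecker delta (a pure matrix identity carrying no derivatives, after renaming the summation index), collapsing that summand to the stated correction term $\b{\p}(\phi_{\alpha\b{j}}\phi^{\b{j}i})\wedge\p(\phi_{\b{\beta}k}\phi^{k\b{l}})\phi_{i\b{l}}$. In the first summand one recognizes $\b{\p}(\p\Omega_{\alpha\b{\beta}}\Omega^{\b{\beta}\gamma})$ as the Chern curvature matrix $R^\gamma_\alpha$ of $(\mc{H},(\Omega_{\alpha\b{\beta}}))$ via the formula $R^j_i=\b{\p}(\p h_{i\b{k}}h^{\b{k}j})$ from Section~\ref{sec:preli}; contracting with $\Omega_{\gamma\b{\beta}}$ then yields exactly $\langle R^{\mc{H}}\tfrac{\delta}{\delta z^\alpha},\tfrac{\delta}{\delta z^\beta}\rangle$. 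The identity \eqref{equ11} follows in the same way, contracting the $\tfrac{\p}{\p v^k}$-coefficient of $R\tfrac{\p}{\p v^i}$ with $\phi_{k\b{j}}$, recognizing $\b{\p}(\p\phi_{i\b{j}}\phi^{\b{j}k})$ as the Chern curvature $R^{\mc{V}}$ of the relative tangent bundle $(\mc{V},(\phi_{i\b{j}}))$, and using $\phi^{\b{l}k}\phi_{k\b{j}}=\delta^{\b{l}}_{\b{j}}$ to simplify the remaining summand. Finally \eqref{equ12} is the most direct: since the target $\tfrac{\p}{\p v^l}$ is vertical, I would simply contract the $\tfrac{\p}{\p v^i}$-coefficient of $R\tfrac{\delta}{\delta z^\alpha}$ with $\phi_{i\b{l}}$ and read off the three terms verbatim, with no index collapse or curvature identification required.

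The only genuinely delicate point is the curvature identification used in \eqref{equ10} and \eqref{equ11}: one must verify that the leading summands really are the \emph{intrinsic} Chern curvatures of the subbundles rather than formal coincidences. For $\mc{V}=T_{\mc{X}/\mc{B}}$ this is automatic, since it is a genuine Hermitian holomorphic bundle with metric $(\phi_{i\b{j}})$. For $\mc{H}$ it requires the observation that, although the horizontal distribution is only a $C^\infty$ splitting, the metric $(\Omega_{\alpha\b{\beta}})$ is the one induced on the holomorphic quotient $T_{\mc{X}}/T_{\mc{X}/\mc{B}}\cong p^*T_{\mc{B}}$ through the identification $\tfrac{\delta}{\delta z^\alpha}\equiv\tfrac{\p}{\p z^\alpha}\ (\mathrm{mod}\ \mc{V})$, so that $\b{\p}(\p\Omega_{\alpha\b{\beta}}\Omega^{\b{\beta}\gamma})$ is a bona fide Chern curvature. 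Once this is granted, everything else is the bookkeeping of contracting the two pre-computed curvature operators against the orthogonal frame.
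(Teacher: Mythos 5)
Your proposal is correct and follows the paper's own route exactly: the paper likewise derives the frame expansions of $R\tfrac{\delta}{\delta z^\alpha}$ and $R\tfrac{\p}{\p v^i}$ from $\n^{1,0}$ and then obtains the proposition by pairing against $\tfrac{\delta}{\delta z^\beta}$ and $\tfrac{\p}{\p v^j}$, using the block-diagonality of $\Omega$ in the frame $\{\tfrac{\delta}{\delta z^\alpha},\tfrac{\p}{\p v^i}\}$ so that only one component survives each pairing and $\Omega^{\b{\sigma}\gamma}\Omega_{\gamma\b{\beta}}=\delta^{\b{\sigma}}_{\b{\beta}}$ collapses the correction term in \eqref{equ10}. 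The only quibble is cosmetic: in \eqref{equ11} the second summand is not simplified via $\phi^{\b{l}k}\phi_{k\b{j}}=\delta^{\b{l}}_{\b{j}}$ (the factor $\phi_{k\b{j}}$ simply remains attached, as in the stated formula), which does not affect correctness.
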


Recall that \( \Omega_{\alpha \bar{\beta}} = k \psi_{\alpha \bar{\beta}} + c(\phi)_{\alpha \bar{\beta}} \). We now derive the following estimates.

\begin{proposition}\label{prop2}
As \( k \to \infty \), the following estimates hold:
\begin{align}
    R_{\gamma \bar{\sigma} \alpha \bar{\beta}} &:= \left\langle R \left( \tfrac{\delta}{\delta z^\gamma}, \tfrac{\delta}{\delta \bar{z}^\sigma} \right) \tfrac{\delta}{\delta z^\alpha}, \tfrac{\delta}{\delta z^\beta} \right\rangle = R^{T_\mathcal{B}}_{\gamma \bar{\sigma} \alpha \bar{\beta}} k + O(1), \label{equ13} \\
    R_{\gamma \bar{\sigma} i \bar{j}} &:= \left\langle R \left( \tfrac{\delta}{\delta z^\gamma}, \tfrac{\delta}{\delta \bar{z}^\sigma} \right) \tfrac{\partial}{\partial v^i}, \tfrac{\partial}{\partial v^j} \right\rangle = R^{\mathcal{V}}_{\gamma \bar{\sigma} i \bar{j}} + O\left( \tfrac{1}{k} \right) = O(1), \label{equ14} \\
    R_{k \bar{l} i \bar{j}} &:= \left\langle R \left( \tfrac{\partial}{\partial v^k}, \tfrac{\partial}{\partial \bar{v}^l} \right) \tfrac{\partial}{\partial v^i}, \tfrac{\partial}{\partial v^j} \right\rangle = R^{\mathcal{V}}_{k \bar{l} i \bar{j}} + O\left( \tfrac{1}{k} \right), \label{equ15} \\
    R_{k \bar{\beta} i \bar{j}} &:= \left\langle R \left( \tfrac{\partial}{\partial v^k}, \tfrac{\delta}{\delta \bar{z}^\beta} \right) \tfrac{\partial}{\partial v^i}, \tfrac{\partial}{\partial v^j} \right\rangle = O(1), \label{equ16} \\
    R_{\alpha \bar{l} i \bar{j}} &:= \left\langle R \left( \tfrac{\delta}{\delta z^\alpha}, \tfrac{\partial}{\partial \bar{v}^l} \right) \tfrac{\partial}{\partial v^i}, \tfrac{\partial}{\partial v^j} \right\rangle = O(1), \label{equ17} \\
    R_{\alpha \bar{l} \gamma \bar{j}} &:= \left\langle R \left( \tfrac{\delta}{\delta z^\alpha}, \tfrac{\partial}{\partial \bar{v}^l} \right) \tfrac{\delta}{\delta z^\gamma}, \tfrac{\partial}{\partial v^j} \right\rangle = O(1), \label{equ18} \\
    R_{i \bar{\sigma} \alpha \bar{\beta}} &:= \left\langle R \left( \tfrac{\partial}{\partial v^i}, \tfrac{\delta}{\delta \bar{z}^\sigma} \right) \tfrac{\delta}{\delta z^\alpha}, \tfrac{\delta}{\delta z^\beta} \right\rangle = O(1). \label{equ19}
\end{align}
Here, \( f(k) = O(g(k)) \) as \( k \to \infty \) means that there exist constants \( k_0 > 0 \) and \( C > 0 \) such that \( |f(k)| < C g(k) \) for any \( k \geq k_0 \).
\end{proposition}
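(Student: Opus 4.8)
The plan is to treat each estimate as power-counting in $k$, feeding the exact identities of Proposition \ref{prop1} two structural facts. First I would record the behaviour of the horizontal metric and its inverse: since $\Omega_{\alpha\bar\beta}=k\psi_{\alpha\bar\beta}+c(\phi)_{\alpha\bar\beta}$ with $\psi$ positive definite, we have $\Omega_{\alpha\bar\beta}=O(k)$ and, by a Neumann series, $\Omega^{\bar\beta\alpha}=\tfrac1k\psi^{\bar\beta\alpha}+O(k^{-2})=O(k^{-1})$, uniformly on $\mathcal X$. Second, every object built from the relative potential alone---the vertical metric $\phi_{i\bar j}$, the lift coefficients $\phi_{\alpha\bar j}\phi^{\bar j i}$, the geodesic curvature $c(\phi)$, and the Chern curvature $R^{\mathcal V}$ of $(\mathcal V,(\phi_{i\bar j}))$---is independent of $k$; although $\phi$ is only a local potential, these combinations are globally defined tensors on the compact manifold $\mathcal X$, so each is $O(1)$ with bound uniform over $\mathcal X$.

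Granting these, the estimates \eqref{equ14}--\eqref{equ17} drop out of \eqref{equ11}: the $R^{\mathcal V}$ term is $O(1)$, while the correction carries a single factor $\Omega^{\bar\beta\alpha}=O(k^{-1})$, so in every evaluation direction the total is $R^{\mathcal V}+O(k^{-1})$. Likewise \eqref{equ18} follows from \eqref{equ12}: its first two bracketed terms are pure-$\phi$, hence $O(1)$, and the third, $\partial\Omega_{\alpha\bar\beta}\Omega^{\bar\beta\gamma}\wedge\bar\partial(\phi_{\gamma\bar j}\phi^{\bar j i})$, pairs $\partial\Omega=O(k)$ with $\Omega^{-1}=O(k^{-1})$, giving $O(1)$.

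The substance lies in \eqref{equ13} and \eqref{equ19}, which both extract components of the $(1,1)$-form $\langle R^{\mathcal H}\tfrac{\delta}{\delta z^\alpha},\tfrac{\delta}{\delta z^\beta}\rangle$ appearing in \eqref{equ10} (the correction term there is pure-$\phi$, hence $O(1)$). I would expand the Chern curvature of $(\mathcal H,(\Omega_{\alpha\bar\beta}))$ as $R^{\mathcal H}_{\alpha\bar\beta}=-\partial\bar\partial\Omega_{\alpha\bar\beta}+\partial\Omega_{\alpha\bar\nu}\,\Omega^{\bar\nu\mu}\wedge\bar\partial\Omega_{\mu\bar\beta}$ in the adapted coframe $\{dz^\gamma,\delta v^i\}$, using $\partial f=\tfrac{\delta f}{\delta z^\gamma}dz^\gamma+\tfrac{\partial f}{\partial v^i}\delta v^i$. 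The crucial point is that $\psi$ is pulled back from the base, so $\tfrac{\partial\psi}{\partial v^i}=0$ and $\tfrac{\delta\psi}{\delta z^\gamma}=\tfrac{\partial\psi}{\partial z^\gamma}$; hence the $O(k)$ part of $-\partial\bar\partial\Omega$ sits purely in the $dz\wedge d\bar z$ slot, and in the quadratic term the lone factor $\Omega^{\bar\nu\mu}=O(k^{-1})$ leaves an $O(k)$ contribution only when both differentiated metrics carry horizontal derivatives. Collecting the surviving $O(k)$ terms in the $dz^\gamma\wedge d\bar z^\sigma$ slot reproduces exactly $k\bigl(-\partial_\gamma\partial_{\bar\sigma}\psi_{\alpha\bar\beta}+\psi^{\bar\nu\mu}\partial_\gamma\psi_{\alpha\bar\nu}\partial_{\bar\sigma}\psi_{\mu\bar\beta}\bigr)=kR^{T_{\mathcal B}}_{\gamma\bar\sigma\alpha\bar\beta}$, which is \eqref{equ13}; the mixed $\delta v^i\wedge d\bar z^\sigma$ slot receives no $k\psi$ contribution and is $O(1)$, which is \eqref{equ19}.

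I expect the main obstacle to be exactly this last bookkeeping: expanding $R^{\mathcal H}$ as a $(1,1)$-form on the total space in the non-coordinate coframe $\{dz^\alpha,\delta v^i\}$, whose part $\delta v^i$ fails to be $d$-closed, and checking that the resulting torsion-type terms involve only derivatives of $\phi$ (so are $O(1)$) and never inflate the leading order. The payoff is the asymmetry underlying the whole proposition: $O(k)$ growth survives only in the fully horizontal curvature \eqref{equ13}, every component touching a vertical direction is at most $O(1)$, and uniformity of all error terms is inherited from the compactness of $\mathcal X$.
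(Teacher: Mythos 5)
Your proposal is correct and follows essentially the same route as the paper: it feeds the exact identities of Proposition \ref{prop1} into a power-counting argument based on $\Omega^{\bar\beta\alpha}=\tfrac1k\psi^{\bar\beta\alpha}+O(k^{-2})$, the $k$-independence of all pure-$\phi$ quantities, and uniformity from compactness of $\mathcal X$. Your expansion of $R^{\mathcal H}$ in the adapted coframe to separate the $dz\wedge d\bar z$ slot (giving \eqref{equ13}) from the mixed slot (giving \eqref{equ19}) is just a more explicit rendering of the step the paper carries out tersely.
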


\begin{proof}
First, note that \( \frac{1}{k} \Omega_{\alpha \bar{\beta}} = \psi_{\alpha \bar{\beta}} + \frac{1}{k} c(\phi)_{\alpha \bar{\beta}} \), so \( k \Omega^{\alpha \bar{\beta}} = \psi^{\alpha \bar{\beta}} + O\left( \frac{1}{k} \right) \). Using Proposition \ref{prop1}  \eqref{equ10}, we have
\begin{align*}
   \left\langle R \tfrac{\delta}{\delta z^\alpha}, \tfrac{\delta}{\delta z^\beta} \right\rangle &= \bar{\partial} \partial \Omega_{\alpha \bar{\beta}} + \partial \Omega_{\alpha \bar{\sigma}} \wedge \bar{\partial} \Omega_{\gamma \bar{\beta}} \Omega^{\bar{\sigma} \gamma}+O(1) \\
   &= k \bar{\partial} \partial \psi_{\alpha \bar{\beta}} + O(1) \\
   &\quad + \left( k \partial \psi_{\alpha \bar{\sigma}} + O(1) \right) \wedge \left( k \bar{\partial} \psi_{\gamma \bar{\beta}} + O(1) \right) \left( \tfrac{1}{k} \psi^{\bar{\sigma} \gamma} + O\left( \tfrac{1}{k^2} \right) \right) \\
   &= k \left( \bar{\partial} \partial \psi_{\alpha \bar{\beta}} + \partial \psi_{\alpha \bar{\sigma}} \wedge \bar{\partial} \psi_{\gamma \bar{\beta}} \psi^{\bar{\sigma} \gamma} \right) + O(1) \\
   &= k \left\langle R^{T_\mathcal{B}} \tfrac{\delta}{\delta z^\alpha}, \tfrac{\delta}{\delta z^\beta} \right\rangle + O(1).
\end{align*}
Thus, \eqref{equ13} and \eqref{equ19} are proved. Now, using Proposition \ref{prop1} \eqref{equ11}, we get
\begin{align}\label{equ25}
    \left\langle R \tfrac{\partial}{\partial v^i}, \tfrac{\partial}{\partial v^j} \right\rangle &= \left\langle R^\mathcal{V} \tfrac{\partial}{\partial v^i}, \tfrac{\partial}{\partial v^j} \right\rangle + O\left( \tfrac{1}{k} \right) = O(1),
\end{align}
which proves \eqref{equ14} through \eqref{equ17}. Finally, Proposition \ref{prop1} \eqref{equ12} gives the proof of \eqref{equ18}.
\end{proof}

Next, we prove Theorem \ref{thm:HSC}. 
\begin{proof}[Proof of Theorem \ref{thm:HSC}]
For any vector \( X \) of type \( (1,0) \), we decompose it as follows
\[
X = Y + Z, \quad Y = a^\alpha \tfrac{\delta}{\delta z^\alpha}, \quad Z = b^i \tfrac{\partial}{\partial v^i}.
\]
Using Proposition \ref{prop2}, the holomorphic sectional curvature along the direction \( X \) is given by
\begin{align}\label{equ20}
\begin{split}
    R_{X \bar{X} X \bar{X}} &= \left\langle R(Y + Z, \bar{Y} + \bar{Z})(Y + Z), Y + Z \right\rangle \\
    &= R_{Y \bar{Y} Y \bar{Y}} + 2R_{Y \bar{Y} Y \bar{Z}} + 2R_{Y \bar{Y} Z \bar{Y}} + 4R_{Y \bar{Y} Z \bar{Z}} + R_{Y \bar{Z} Y \bar{Z}} \\
    &\quad + R_{Z \bar{Y} Z \bar{Y}} + 2R_{Y \bar{Z} Z \bar{Z}} + 2R_{Z \bar{Z} Z \bar{Y}} + R_{Z \bar{Z} Z \bar{Z}} \\
    &= R^{T_\mathcal{B}}_{p_* Y \o{p_* Y} p_* Y \o{p_* Y}} \cdot k + R^{\mathcal{V}}_{Z \bar{Z} Z \bar{Z}} + O\left( \tfrac{1}{k} \right) \|b\|^4 \\
    &\quad + O(1)(\|a\|^4 + \|a\|^3 \|b\| + \|a\|^2 \|b\|^2 + \|a\| \|b\|^3),
\end{split}
\end{align}
where \( a = (a^1, \dots, a^{m}) \) and \( b = (b^1, \dots, b^{n}) \),
and the norms of $a$ and $b$ are defined as follows
\begin{equation}\label{norms}
  \|a\|^2:={a^\alpha\o{a^\beta}\psi_{\alpha\b{\beta}}}=\|p_*Y\|^2_{\omega_\mc{B}}\text{ and }\|b\|^2:={b^i\o{b^j}\phi_{i\b{j}}}=\|Z\|^2_{\omega_{\mc{X}}}.
\end{equation}

By assumption, if both the base and the fibers have negative holomorphic sectional curvature, then
\begin{equation}\label{equ4}
     R^{T_\mc{B}}_{p_*Y\o{p_*Y}p_*Y\o{p_*Y}}\leq -\epsilon_0\|p_*Y\|_{\omega_\mc{B}}^4=-\epsilon_0|a^\alpha\o{a^\beta}\psi_{\alpha\b{\beta}}|^2= -\epsilon_0 \|a\|^4,
\end{equation}
for some small positive constant \( \epsilon_0 \), where \( -\epsilon_0 \) is taken to be the maximum holomorphic sectional curvature of \( (\mathcal{B}, \omega_{\mathcal{B}}) \). Similarly, we have
\begin{equation}\label{equ21}
    R^{\mathcal{V}}_{Z \bar{Z} Z \bar{Z}} \leq -\epsilon_1 \|b\|^4
\end{equation}
for some small positive constant \( \epsilon_1 \). Substituting \eqref{equ4} and \eqref{equ21} into \eqref{equ20} yields
\begin{align}\label{equ22}
\begin{split}
    R_{X \bar{X} X \bar{X}} &\leq -\epsilon_0 k \|a\|^4 - \epsilon_1 \|b\|^4 + O\left( \tfrac{1}{k} \right) \|b\|^4 \\
    &\quad + O(1)(\|a\|^4 + \|a\|^3 \|b\| + \|a\|^2 \|b\|^2 + \|a\| \|b\|^3).
\end{split}
\end{align}
Using Young's inequality, we have the following estimates:
\begin{align}\label{Yang-ineqn}
\begin{split}
   \|a\|^3\|b\|&\leq \tfrac{3}{4}k^{\tfrac{1}{6}}\|a\|^4+\tfrac{1}{4}\tfrac{1}{\sqrt{k}}\|b\|^4.\\
    \|b\|^3\|a\|&\leq \tfrac{3}{4}k^{-\tfrac{1}{6}}\|b\|^4+\tfrac{1}{4}\sqrt{k}\|a\|^4.\\
    \|a\|^2\|b\|^2&\leq \tfrac{\sqrt{k}}{2}\|a\|^4+\tfrac{1}{2\sqrt{k}}\|b\|^4.
 \end{split}
\end{align}
Thus, \eqref{equ22} simplifies to
\begin{align*}
\begin{split}
   R_{X\b{X}X\b{X}}&\leq \|a\|^4(-\epsilon_0 k+O(1)(1+\tfrac{3}{4}k^{\tfrac{1}{6}}+\tfrac{3}{4}\sqrt{k}))\\
   &\quad +\|b\|^4(-\epsilon_1+O(\tfrac{1}{k})+O(1)(\tfrac{1}{4\sqrt{k}}+\tfrac{3}{4}k^{-\tfrac{1}{6}}+\tfrac{1}{2\sqrt{k}}))\\
   &\leq \|a\|^4(-\epsilon_0 k+O(\sqrt{k}))+\|b\|^4(-\epsilon_1+O(k^{-\tfrac{1}{6}})).
 \end{split}
\end{align*}
Hence, there exists some \( k_0 > 0 \) such that for any \( k \geq k_0 \),
\[
-\epsilon_0 k + O(\sqrt{k}) < 0 \quad \text{and} \quad -\epsilon_1 + O( k^{-\tfrac{1}{6}}) < 0.
\]
Therefore,
\[
R_{X \bar{X} X \bar{X}} < 0,
\]
for any \( \|a\| \neq 0 \) or \( \|b\| \neq 0 \), i.e., for any non-zero vector \( X \). This concludes the proof of the negativity of the holomorphic sectional curvature of the K\"ahler metric \( \Omega = k p^*\omega_{\mathcal{B}} + \omega_{\mathcal{X}} \) for any \( k \geq k_0 \).

If both the base and the fibers have positive holomorphic sectional curvature, then
\begin{equation}\label{equ4}
     R^{T_\mc{B}}_{p_*Y\o{p_*Y}p_*Y\o{p_*Y}}\geq \epsilon_0\|p_*Y\|_{\omega_\mc{B}}^4=\epsilon_0|a^\alpha\o{a^\beta}\psi_{\alpha\b{\beta}}|^2= \epsilon_0 \|a\|^4,
\end{equation}
for some small positive constant \( \epsilon_0 \), where \( \epsilon_0 \) is taken to be the minimum holomorphic sectional curvature of \( (\mathcal{B}, \omega_{\mathcal{B}}) \). Similarly, we have
\begin{equation}\label{equ21}
    R^{\mathcal{V}}_{Z \bar{Z} Z \bar{Z}} \geq \epsilon_1 \|b\|^4
\end{equation}
for some small positive constant \( \epsilon_1 \). Similarly, one has
\begin{align}\label{equ22}
\begin{split}
    R_{X \bar{X} X \bar{X}} &\geq \epsilon_0 k \|a\|^4 + \epsilon_1 \|b\|^4 + O\left( \tfrac{1}{k} \right) \|b\|^4 \\
    &\quad + O(1)(\|a\|^4 + \|a\|^3 \|b\| + \|a\|^2 \|b\|^2 + \|a\| \|b\|^3).
\end{split}
\end{align}
Using Yang inequality \eqref{Yang-ineqn}, we obtain that
\begin{align*}
\begin{split}
   R_{X\b{X}X\b{X}}&\geq \|a\|^4(\epsilon_0 k-O(1)(1+\tfrac{3}{4}k^{\tfrac{1}{6}}+\tfrac{3}{4}\sqrt{k}))\\
   &\quad +\|b\|^4(\epsilon_1+O(\tfrac{1}{k})-O(1)(\tfrac{1}{4\sqrt{k}}+\tfrac{3}{4}k^{-\tfrac{1}{6}}+\tfrac{1}{2\sqrt{k}})).
 \end{split}
\end{align*}
Then there exists $k_0\geq 0$ such that for any $k\geq k_0$, one has
\begin{equation*}
  \epsilon_0 k-O(1)(1+\tfrac{3}{4}k^{\tfrac{1}{6}}+\tfrac{3}{4}\sqrt{k})\geq \frac{\epsilon_0}{2}k
\end{equation*}
and 
\begin{equation*}
  \epsilon_1+O(\tfrac{1}{k})-O(1)(\tfrac{1}{4\sqrt{k}}+\tfrac{3}{4}k^{-\tfrac{1}{6}}+\tfrac{1}{2\sqrt{k}})\geq \frac{\epsilon_1}{2}.
\end{equation*}
This follows that 
\begin{equation*}
  R_{X\b{X}X\b{X}}\geq \frac{\epsilon_0}{2}k\|a\|^4+\frac{\epsilon_1}{2}\|b\|^4
\end{equation*}
 Therefore,
\[
R_{X \bar{X} X \bar{X}} > 0,
\]
for any \( \|a\| \neq 0 \) or \( \|b\| \neq 0 \), i.e., for any non-zero vector \( X \). This concludes the proof of the positivity of the holomorphic sectional curvature of the K\"ahler metric \( \Omega = k p^*\omega_{\mathcal{B}} + \omega_{\mathcal{X}} \) for any large $k\geq k_0$. 
\end{proof}
\begin{remark}
By \cite{MR3489705} and \cite{MR3715350}, if each fiber has a K\"ahler metric with negative holomorphic sectional curvature, then the canonical bundle of each fiber is ample. Hence, this relative K\"ahler fibration is a holomorphic family of compact, canonically polarized manifolds.
\end{remark}

\subsection{Negativity of holomorphic bisectional curvature}

In this section, we will prove that the K\"ahler metrics in \eqref{Kahler metrics} have negative holomorphic bisectional curvature and prove Theorem \ref{thm2}.

Recall that the relative K\"ahler form \( \omega_{\mathcal{X}} \) is a real \((1,1)\)-form on \( \mathcal{X} \), which is positive when restricted to each fiber. We express it as follows
\[
\omega_{\mathcal{X}} = \sqrt{-1} \partial \bar{\partial} \phi = c(\phi) + \sqrt{-1} \phi_{i \bar{j}} \delta v^i \wedge \delta \bar{v}^j.
\]
The relative tangent bundle $T_{\mc{X}/\mc{B}}=\mc{V}$ is spanned by $\{\frac{\p}{\p v^i}\}_{1\leq i\leq n}$. Hence the K\"ahler form $\omega_{\mc{X}}$ induces a canonical Hermitian metric on $\mc{V}$ by 
\begin{equation*}
  \left\langle \tfrac{\p}{\p v^i},\tfrac{\p}{\p v^j}\right\rangle:=\phi_{i\b{j}}.
\end{equation*}

In this section, we assume that the induced Hermitian metric has Griffiths negative curvature. Denote
\begin{equation*}
  \Omega_0:=p^* \omega_{\mathcal{B}} +  \sqrt{-1} \phi_{i \bar{j}} \delta v^i \wedge \delta \bar{v}^j,
\end{equation*}
which is a Hermitian metric on $\mc{X}$.
Since \( \mathcal{X} \) is compact, there exist two uniform positive constants \( c_0 \) and \( C_0 \) such that
\begin{equation}\label{equ5}
c_0 \|X\|_{\Omega_0}^2 \|V\|^2 \leq -R^{\mathcal{V}}_{X \bar{X} V \bar{V}} \leq C_0 \|X\|_{\Omega_0}^2 \|V\|^2
\end{equation}
for any tangent vector $X$ and $V\in \mc{V}$.

For any non-zero vector \( X \) of type \( (1,0) \), we can decompose it as \[ X = Y + Z \] where \( Y = a^\alpha \tfrac{\delta}{\delta z^\alpha} \) and \( Z = b^i \tfrac{\partial}{\partial v^i} \). 

For fixed non-zero $Y,Z,V$, set
\[
P(t):=R^{\mc{V}}_{\,Y+tZ\,\overline{Y+tZ}\,V\bar V}
=t^{2}R^{\mc{V}}_{\,ZZ\bar V V}+t\bigl(R^{\mc{V}}_{\,YZ\bar V V}+R^{\mc{V}}_{\,ZY\bar V V}\bigr)+R^{\mc{V}}_{\,YY\bar V V}.
\]
Since $(V,\phi_{i\bar j})$ is Griffiths negative, we have $P(t)<0$ for all $t\in\mathbb R$, and therefore the discriminant is negative, that is,
\[
\bigl|R^{\mc{V}}_{\,YZ\bar V V}+R^{\mc{V}}_{\,ZY\bar V V}\bigr|^{2}<4\,R^{\mc{V}}_{\,ZZ\bar V V}\,R^{\mc{V}}_{\,YY\bar V V}.
\]
Equivalently,
\[
2\sqrt{R^{\mc{V}}_{\,ZZ\bar V V}\,R^{\mc{V}}_{\,YY\bar V V}}-\bigl|R^{\mc{V}}_{\,YZ\bar V V}+R^{\mc{V}}_{\,ZY\bar V V}\bigr|>0,
\]
where the square root is well-defined since $R^{\mc{V}}_{\,ZZ\bar V V}<0$ and $R^{\mc{V}}_{\,YY\bar V V}<0$.
By restricting to $\|Y\|_{\Omega_0}=\|Z\|_{\Omega_0}=\|V\|=1$, the left-hand side defines a continuous positive function on a compact set, hence it is bounded below by some $c_1>0$.
By homogeneity, we obtain
\begin{equation}\label{equ6}
2\sqrt{R^{\mc{V}}_{\,ZZ\bar V V}\,R^{\mc{V}}_{\,YY\bar V V}}-\bigl|R^{\mc{V}}_{\,YZ\bar V V}+R^{\mc{V}}_{\,ZY\bar V V}\bigr|
\ge c_1\,\|V\|^{2}\,\|Z\|_{\Omega_0}\,\|Y\|_{\Omega_0}.
\end{equation}

On the other hand, we also have
\[
\sqrt{R^{\mathcal{V}}_{Z \bar{Z} V \bar{V}} R^{\mathcal{V}}_{Y \bar{Y} V \bar{V}}} \leq C_0 \|V\|^2 \|Z\|_{\Omega_0} \|Y\|_{\Omega_0}.
\]
Combining this with \eqref{equ6}, we obtain
\begin{align*}
& \quad \left( -R^{\mathcal{V}}_{Z \bar{Z} V \bar{V}} - R^{\mathcal{V}}_{Y \bar{Y} V \bar{V}} \right) \left( 1 - \tfrac{c_1}{2C_0} \right) - \left| R^{\mathcal{V}}_{Y \bar{Z} V \bar{V}} + R^{\mathcal{V}}_{Z \bar{Y} V \bar{V}} \right| \\
&\geq 2 \sqrt{R^{\mathcal{V}}_{Z \bar{Z} V \bar{V}} R^{\mathcal{V}}_{Y \bar{Y} V \bar{V}}} \left( 1 - \tfrac{c_1}{2C_0} \right) - \left| R^{\mathcal{V}}_{Y \bar{Z} V \bar{V}} + R^{\mathcal{V}}_{Z \bar{Y} V \bar{V}} \right| \\&\geq 0.
\end{align*}
Thus, we conclude
\begin{align}\label{equ7}
\begin{split}
&\quad -R^{\mathcal{V}}_{Z \bar{Z} V \bar{V}} - R^{\mathcal{V}}_{Y \bar{Y} V \bar{V}} - \left| R^{\mathcal{V}}_{Y \bar{Z} V \bar{V}} + R^{\mathcal{V}}_{Z \bar{Y} V \bar{V}} \right| \\
&\geq \epsilon_0 \left( -R^{\mathcal{V}}_{Z \bar{Z} V \bar{V}} - R^{\mathcal{V}}_{Y \bar{Y} V \bar{V}} \right),
\end{split}
\end{align}
where \( \epsilon_0 = \frac{c_1}{2C_0} > 0 \).

For any two non-zero vectors \( X \) and \( W \) of type \( (1,0) \), we decompose them as follows
\[
X = Y + Z\text{ and }W = U + V,
\]
where \( Y = a^\alpha \frac{\delta}{\delta z^\alpha} \), \( Z = b^i \frac{\partial}{\partial v^i} \), \( U = c^\alpha \frac{\delta}{\delta z^\alpha} \), and \( V = d^i \frac{\partial}{\partial v^i} \). Then, the holomorphic bisectional curvature satisfies
\begin{align}\label{equ24}
\begin{split}
  R_{X \bar{X} W \bar{W}} &= \left\langle R(Y+Z, \o{Y+Z})(U+V), U+V \right\rangle \\
  &= R_{Y \bar{Y} U \bar{U}} + R_{Y \bar{Y} U \bar{V}} + R_{Y \bar{Y} V \bar{U}} + R_{Y \bar{Y} V \bar{V}} \\
  &\quad + R_{Y \bar{Z} U \bar{U}} + R_{Y \bar{Z} U \bar{V}} + R_{Y \bar{Z} V \bar{U}} + R_{Y \bar{Z} V \bar{V}} \\
  &\quad + R_{Z \bar{Y} U \bar{U}} + R_{Z \bar{Y} U \bar{V}} + R_{Z \bar{Y} V \bar{U}} + R_{Z \bar{Y} V \bar{V}} \\
  &\quad + R_{Z \bar{Z} U \bar{U}} + R_{Z \bar{Z} U \bar{V}} + R_{Z \bar{Z} V \bar{U}} + R_{Z \bar{Z} V \bar{V}}.
  \end{split}
\end{align}

By Proposition \ref{prop2} and equation \eqref{equ5}, and following the same reasoning as in the proof of \eqref{equ4} and by assumption $(\mc{B},\omega_{\mc{B}})$ has negative holomorphic bisectional curvature,  there exists a small positive constant \( \epsilon_1 \) such that
\begin{align*}
  R_{Y \bar{Y} U \bar{U}} &\leq \|a\|^2 \|c\|^2 (-\epsilon_1 k + O(1)), \\
  R_{Z \bar{Z} V \bar{V}} &\leq \|b\|^2 \|d\|^2 (-\epsilon_1 + O(\tfrac{1}{k})), \\
  R_{Y \bar{Y} V \bar{V}} &\leq \|a\|^2 \|d\|^2 (-\epsilon_1 + O(\tfrac{1}{k})), \\
  R_{Z \bar{Z} U \bar{U}} &\leq \|b\|^2 \|c\|^2 (-\epsilon_1 + O(\tfrac{1}{k})),
\end{align*}
where the norms of $\|a\|,\|b\|,\|c\|,\|d\|$ are defined by  \eqref{norms}.

The cross terms involving different combinations of \( Y \), \( Z \), \( U \), and \( V \) have the following bounds
\begin{align*}
  |R_{Y \bar{Y} U \bar{V}}| = |R_{Y \bar{Y} V \bar{U}}| &\leq \|a\|^2 \|c\| \|d\| O(1), \\
  |R_{Y \bar{Z} U \bar{U}}| = |R_{Z \bar{Y} U \bar{U}}| &\leq \|a\| \|b\| \|c\|^2 O(1), \\
  |R_{Y \bar{Z} V \bar{V}}| = |R_{Z \bar{Y} V \bar{V}}| &\leq \|a\| \|b\| \|d\|^2 O(1), \\
  |R_{Z \bar{Z} U \bar{V}}| = |R_{Z \bar{Z} V \bar{U}}| &\leq \|b\|^2 \|c\| \|d\| O(1), \\
  |R_{Y \bar{Z} U \bar{V}}| = |R_{Z \bar{Y} V \bar{U}}| &\leq \|a\| \|b\| \|c\| \|d\| O(1).
\end{align*}

Using equations \eqref{equ25} and \eqref{equ7} and the symmetry of the curvature tensor, we obtain
\begin{align}\label{equ23}
\begin{split}
&  \quad R_{Z\b{Z}U\b{U}}+R_{Z\b{Z}V\b{V}}+|R_{Z\b{Z}U\b{V}}+R_{Z\b{Z}V\b{U}}|\\
&=R_{U\b{U}Z\b{Z}}+R_{V\b{V}Z\b{Z}}+|R_{U\b{V}Z\b{Z}}+R_{V\b{U}Z\b{Z}}|\\
&\leq R_{U\b{U}Z\b{Z}}^{\mc{V}}+R_{V\b{V}Z\b{Z}}^{\mc{V}}+|R^\mc{V}_{U\b{V}Z\b{Z}}+R^{\mc{V}}_{V\b{U}Z\b{Z}}|\\
&\quad+O(\tfrac{1}{k})\|b\|^2(\|c\|+\|d\|)^2\\
&\leq \epsilon_0\left(R_{U\b{U}Z\b{Z}}^{\mc{V}}+R_{V\b{V}Z\b{Z}}^{\mc{V}}\right)+O(\tfrac{1}{k})\|b\|^2(\|c\|+\|d\|)^2\\
&\leq (-\epsilon_0\epsilon_1+O(\tfrac{1}{k}))\|b\|^2(\|c\|^2+\|d\|^2).
 \end{split}
\end{align}
Similarly, we have
\begin{align*}
\begin{split}
 &\quad  R_{Y\b{Y}V\b{V}}+R_{Z\b{Z}V\b{V}}+|R_{Z\b{Y}V\b{V}}+R_{Z\b{Y}V\b{V}}|\\
 &  \leq (-\epsilon_0\epsilon_1+O(\tfrac{1}{k}))\|d\|^2(\|a\|^2+\|b\|^2).
 \end{split}
\end{align*}

Now we begin to prove Theorem \ref{thm2}.

\begin{proof}[Proof of Theorem \ref{thm2}]

We will discuss the negativity of holomorphic bisectional curvature in the following five cases.

{\bf Case I:} If $\|b\|=0$, i.e. $Z=0$, then 
  \begin{align*}
\begin{split}
 R_{X\b{X}W\b{W}} &=R_{Y\b{Y}U\b{U}}+R_{Y\b{Y}U\b{V}}+R_{Y\b{Y}V\b{U}}+R_{Y\b{Y}V\b{V}}\\
 &\leq\|a\|^2\|c\|^2(-\epsilon_1k+O(1))+2\|a\|^2\|c\|\|d\|O(1)\\
 &\quad +\|a\|^2\|d\|^2(-\epsilon_1 +O(\tfrac{1}{k}))\\
 &\leq\|a\|^2\|c\|^2(-\epsilon_1k+O(1)+\sqrt{k}O(1))\\
 &\quad+\|a\|^2\|d\|^2(-\epsilon_1 +O(\tfrac{1}{k})+\tfrac{1}{\sqrt{k}}O(1)),
 \end{split}
\end{align*}
where the last inequality by the inequality $2\|c\|\|d\|\leq \|c\|^2\sqrt{k}+\|d\|^2/\sqrt{k}$.
Hence there exists a large $k_0\geq 0$ such that 
\begin{equation*}
  -\epsilon_1k+O(1)+\sqrt{k}O(1)<0 \text{ and } -\epsilon_1 +O(\tfrac{1}{k})+\tfrac{1}{\sqrt{k}}<0
\end{equation*}
for any $k\geq k_0$. Hence $ R_{X\b{X}W\b{W}}<0$.  

{\bf Case II:} If $\|d\|=0$, i.e. $V=0$, this reduces to the previous case since $ R_{X\b{X}W\b{W}}= R_{W\b{W}X\b{X}}$. Hence $ R_{X\b{X}W\b{W}}<0$ for any $k\geq k_0$.

{\bf Case III:} If $\|b\|=\|d\|=1$ and $\|a\|\leq \|c\|$, using \eqref{equ23}, \eqref{equ24} and the estimates on curvature tensors,  then 
\begin{align*}
\begin{split}
R_{X\b{X}W\b{W}}
&\leq\|a\|^2\|c\|^2(-\epsilon_1k+O(1))+2\|a\|^2\|c\|O(1)+2\|a\|\|c\|^2O(1)\\
&\quad +4\|a\|\|c\|O(1)+ \|a\|^2(-\epsilon_1 +O(\tfrac{1}{k}))+2\|a\|O(1)\\
&\quad +(-\epsilon_0\epsilon_1+O(\tfrac{1}{k}))(\|c\|^2+1)\\
&\leq \|a\|^2\|c\|^2(-\epsilon_1k+O(1)+4\sqrt{k}O(1))\\
&\quad+ \|a\|^2(-\epsilon_1 +O(\tfrac{1}{k})+O(\tfrac{1}{\sqrt{k}}))\\
&\quad+\|c\|^2(-\epsilon_0\epsilon_1+O(\tfrac{1}{k})+O(\tfrac{1}{\sqrt{k}}))+(-\epsilon_0\epsilon_1+O(\tfrac{1}{k})\\
&\quad+O(\tfrac{1}{\sqrt{k}}))+2\|a\|O(1).
 \end{split}
\end{align*}
By choosing  $k$  large enough such that
\begin{equation*}
   \begin{cases}
-\epsilon_1k+O(1)+4\sqrt{k}O(1) 	& <-\tfrac{\epsilon_1}{2}k,\\
 -\epsilon_1 +O(\tfrac{1}{k})+O(\tfrac{1}{\sqrt{k}})	&<0,\\
 -\epsilon_0\epsilon_1+O(\tfrac{1}{k})+O(\tfrac{1}{\sqrt{k}})&<0,\\
 -\epsilon_0\epsilon_1+O(\tfrac{1}{k})+O(\tfrac{1}{\sqrt{k}})&<-\tfrac{1}{2}\epsilon_0\epsilon_1.
 \end{cases}
\end{equation*}
We obtain
\begin{align*}
\begin{split}
  R_{X\b{X}W\b{W}}&\leq -\tfrac{\epsilon_1}{2}k\|a\|^4-\tfrac{1}{2}\epsilon_0\epsilon_1+2\|a\|O(1)\\
  &\leq (-\tfrac{\epsilon}{2}k+\tfrac{\sqrt{k}}{2}O(1))\|a\|^4+(-\tfrac{1}{2}\epsilon_0\epsilon_1+\tfrac{3}{2}k^{-\tfrac{1}{6}}O(1)),
 \end{split}
\end{align*}
where the last inequality by Young inequality $\|a\|\leq \frac{\|a\|^4\sqrt{k}}{4}+\frac{3}{4}k^{-\tfrac{1}{6}}$.  By taking $k$ sufficiently large, one has
\begin{equation*}
 \begin{cases}
 	  -\tfrac{\epsilon_1}{2}k+\tfrac{\sqrt{k}}{2}O(1)&<0, \\
 -\tfrac{1}{2}\epsilon_0\epsilon_1+\tfrac{3}{2}k^{-\tfrac{1}{6}}O(1)	&<0.
 \end{cases}
\end{equation*}
Thus, we can find $k_1>0$, such that $ R_{X\b{X}W\b{W}}<0$ for any $k\geq k_1$.

{\bf Case IV:} For the case where $\|b\|=\|d\|=1$ and $\|a\|\geq  \|c\|$,  this follows from Case III, as 
\begin{equation*}
 R_{X\b{X}W\b{W}}= R_{W\b{W}X\b{X}}<0
\end{equation*}
for any $k\geq k_1$.

{\bf Case V:} If $\|b\|\neq 0$ and $\|d\|\neq 0$, then 
\begin{equation*}
  R_{X\b{X}W\b{W}}=\|b\|^2\|d\|^2R_{X'\o{X'}W'\o{W'}},
\end{equation*}
where $X'=\tfrac{1}{\|b\|}X$ and $W'=\tfrac{1}{\|d\|}W$. If we assume that 
$X'=a'^\alpha\tfrac{\delta}{\delta z^\alpha}+b'^i\tfrac{\p}{\p v^i}$, $ W'=c'^\alpha\tfrac{\delta}{\delta z^\alpha}+d'^i\tfrac{\p}{\p v^i}$, then $b'=\tfrac{b}{\|b\|}$, and so $\|b'\|=1$. Similarly, $\|d'\|=1$. By the above two cases, we obtain $R_{X\b{X}W\b{W}}<0$ for any $k\geq k_1$. 

In summary, we can find $k_0, k_1>0$, for any $k\geq \max\{k_0,k_1\}$, and for any two non-zero $(1,0)$-type vectors $X,W$, 
we have $ R_{X\b{X}W\b{W}}<0$. This completes the proof of Theorem \ref{thm2}. 
\end{proof}

\section{Applications}\label{sec:app}

This section will consider the relative K\"ahler fibration comes from a holomorphic family of compact, canonically polarized manifolds.
We assume that $p:\mc{X}\to \mc{B}$ is a holomorphic family of compact, canonically polarized manifolds, and each fiber is equipped with a K\"ahler-Einstein metric of negative scalar curvature. One can refer to \cite{MR2969273} for more details. 

 Let 
\[
\omega_{\mathcal{X}/\mathcal{B}} = \sqrt{-1} g_{i \bar{j}}(z, v) dv^i \wedge d\bar{v}^j
\]
denote the smooth family of K\"ahler-Einstein metrics satisfying the equation
\begin{equation}\label{equ3}
\tfrac{\partial^2}{\partial v^i \partial \bar{v}^j} \log \det (g_{i \bar{j}}) = g_{i \bar{j}}.
\end{equation}
Next, we define 
\[
\omega_{\mathcal{X}} = \sqrt{-1} \partial \bar{\partial} \log \det (g_{i \bar{j}}) = \sqrt{-1} \partial \bar{\partial} \phi,
\]
where \( \phi := \log \det (g_{i \bar{j}}) \). Hence $\omega_\mc{X}$ is a relative K\"ahler form on $\mc{X}$.
 By equation \eqref{equ3}, we have
\[
e^\phi = \det (g_{i \bar{j}}) = \det (\phi_{i \bar{j}}).
\]

The geodesic curvature form satisfies the equation:
\[
(1 + \Box) c(\phi)_{\alpha \bar{\beta}} = \langle \mu_\alpha, \mu_\beta \rangle,
\]
where \[ \mu_\alpha = \bar{\partial}^V (\tfrac{\delta}{\delta z^\alpha}) = - \partial_{\bar{l}} (\phi_{\alpha \bar{j}} \phi^{\bar{j} k}) \tfrac{\partial}{\partial v^k} \otimes d\bar{v}^l \in \mathbb{H}^{0,1}(X_z, T_{X_z}) \] is harmonic, and \( \Box := - \phi^{i \bar{j}} \partial_i \partial_{\bar{j}} \); see \cite[Proposition 3]{MR2969273}. From \cite[Theorem 1]{MR2969273}, \( c(\phi) \) is semi-positive and strictly positive in the horizontal directions for families that are not infinitesimally trivial.
 If the family is effectively parametrized, then \( \Omega \) defined by 
 \begin{equation*}
  \Omega = k(p^* \omega_{\mathcal{B}}) + \omega_{\mathcal{X}},
\end{equation*}
  remains a K\"ahler metric on \( \mathcal{X} \) even for \( k = 0 \).

Note that the K\"ahler-Einstein metric on each fiber is given by \( \omega_{\mathcal{X}/\mathcal{B}} = \sqrt{-1} \phi_{i \bar{j}} dv^i \wedge d\bar{v}^j \), which induces a Hermitian metric on the relative tangent bundle \( \mathcal{V} = T_{\mathcal{X}/\mathcal{B}} \) as
\[
\left\langle \tfrac{\partial}{\partial v^i}, \tfrac{\partial}{\partial v^j} \right\rangle := \phi_{i \bar{j}}.
\]
The Hermitian vector bundle \( (T_{\mathcal{X}/\mathcal{B}}, \omega_{\mathcal{X}/\mathcal{B}}) \) is Griffiths negative if
\begin{equation}\label{equ55}
R^{\mathcal{V}}_{X \bar{X} V \bar{V}} := \left\langle R^{\mathcal{V}}(X, \bar{X}) V, V \right\rangle < 0
\end{equation}
for any non-zero vector \( X \in T^{1,0}_{(z,v)} \mathcal{X} \) and non-zero vector \( V \in \mathcal{V}_{(z,v)} \).

In particular, the geodesic curvature form satisfies
\begin{align*}
c(\phi) &= \partial \bar{\partial} \phi \left( \tfrac{\delta}{\delta z^\alpha}, \tfrac{\delta}{\delta z^\beta} \right) \sqrt{-1} dz^\alpha \wedge d\bar{z}^\beta \\
&= \partial \bar{\partial} \log \det (\phi_{i \bar{j}}) \left( \tfrac{\delta}{\delta z^\alpha}, \tfrac{\delta}{\delta z^\beta} \right) \sqrt{-1} dz^\alpha \wedge d\bar{z}^\beta \\
&= - R^{\mathcal{V}}_{\alpha \bar{\beta} i \bar{j}} \phi^{i \bar{j}} \sqrt{-1} dz^\alpha \wedge d\bar{z}^\beta,
\end{align*}
which is strictly positive in the horizontal directions by \eqref{equ55}. From Theorem \ref{thm2}, we obtain the following theorem.

\begin{theorem}\label{thm3}
Let $p:\mc{X}\to \mc{B}$ be a compact holomorphic fibration over a compact K\"ahler manifold $\mc{B}$ with negative holomorphic bisectional curvature.
Assume that the fibers are canonically polarized, and let $\omega_{\mc{X}/\mc{B}}$ denote the relative K\"ahler--Einstein metric along the fibers.
If the induced Hermitian vector bundle $(T_{\mc{X}/\mc{B}},\omega_{\mc{X}/\mc{B}})$ is Griffiths negative, then $\mc{X}$ admits a K\"ahler metric with negative holomorphic bisectional curvature.
\end{theorem}

As an application, we can solve Problem \ref{pro1} for the case where the fibers have dimension one.

\begin{corollary}
	Let \( p: \mathcal{X} \to \mathcal{B} \) be a compact holomorphic fibration over a K\"ahler manifold \( \mathcal{B} \) with negative holomorphic bisectional curvature. Suppose the fibration is a holomorphic family of compact Riemann surfaces of genus \( \geq 2 \) and is effectively parametrized. Then there exists a K\"ahler metric on \( \mathcal{X} \) with negative holomorphic bisectional curvature.
\end{corollary}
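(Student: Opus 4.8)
The plan is to deduce this corollary as a direct application of Theorem \ref{thm3}. First I would note that a compact Riemann surface of genus $\geq 2$ is canonically polarized, and by the uniformization theorem carries a unique hyperbolic metric, that is, a K\"ahler--Einstein metric of constant negative curvature. As $z$ varies over $\mc{B}$, these fiberwise metrics assemble into a smooth family $\omega_{\mc{X}/\mc{B}}=\sqrt{-1}\,g_{1\b{1}}(z,v)\,dv\wedge d\b{v}$ normalized as in \eqref{equ3}, placing us exactly in the framework of a holomorphic family of compact, canonically polarized manifolds endowed with fiberwise K\"ahler--Einstein metrics. It then remains only to verify the single hypothesis of Theorem \ref{thm3}: that the induced Hermitian metric on the relative tangent bundle $T_{\mc{X}/\mc{B}}$ is Griffiths negative.

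The decisive simplification in the one-dimensional case is that $\mc{V}=T_{\mc{X}/\mc{B}}$ is a \emph{line} bundle. For a Hermitian line bundle, Griffiths negativity in the sense of Definition \ref{G-N} is equivalent to its Chern curvature $(1,1)$-form being negative definite on the full tangent space $T_{\mc{X}}$, since the fiber factor $v^1\o{v}^1$ contributes only a positive scalar. With the induced metric $h=\phi_{1\b{1}}=g_{1\b{1}}$, the Chern curvature of $T_{\mc{X}/\mc{B}}$ is $-\sqrt{-1}\,\p\b{\p}\log\phi_{1\b{1}}$; using that for one-dimensional fibers $\phi=\log\det(g_{i\b{j}})=\log g_{1\b{1}}=\log\phi_{1\b{1}}$, this equals $-\sqrt{-1}\,\p\b{\p}\phi=-\omega_{\mc{X}}$. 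Thus Griffiths negativity of $T_{\mc{X}/\mc{B}}$ is precisely the assertion that $\omega_{\mc{X}}$ is a positive definite $(1,1)$-form, i.e.\ a genuine K\"ahler form on all of $\mc{X}$.

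To establish this positivity I would invoke the decomposition $\omega_{\mc{X}}=c(\phi)+\sqrt{-1}\,\phi_{i\b{j}}\delta v^i\wedge\delta\b{v}^j$. The vertical term is positive along each fiber since $\phi_{1\b{1}}=g_{1\b{1}}>0$, while the geodesic curvature form $c(\phi)$ is semi-positive and, because the family is effectively parametrized so that the Kodaira--Spencer map $\mu_\alpha$ is injective, strictly positive in the horizontal directions by Schumacher's positivity theorem \cite[Theorem 1]{MR2969273} (equivalently $c(\phi)_{\alpha\b{\beta}}=-R^{\mc{V}}_{\alpha\b{\beta}1\b{1}}\phi^{1\b{1}}>0$ horizontally). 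Since these two terms live in the orthogonal horizontal and vertical summands of the coframe $\{dz^\alpha,\delta v^i\}$, their sum $\omega_{\mc{X}}$ is positive definite on $T_{\mc{X}}$, yielding the required Griffiths negativity of $T_{\mc{X}/\mc{B}}$. Theorem \ref{thm3} then furnishes the K\"ahler metric $\Omega=k(p^*\omega_{\mc{B}})+\omega_{\mc{X}}$ of negative holomorphic bisectional curvature for all large $k$. The main obstacle is the curvature sign bookkeeping of the middle paragraph---correctly identifying the induced line-bundle metric, computing its Chern curvature, and matching it with $-\omega_{\mc{X}}$ so that Griffiths negativity becomes the positivity of $\omega_{\mc{X}}$; once this identification is secured, the positivity input is supplied directly by \cite{MR2969273} together with the effectivity hypothesis, and the conclusion is inherited verbatim from Theorem \ref{thm3}.
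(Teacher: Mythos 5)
Your proposal is correct and follows essentially the same route as the paper: reduce Griffiths negativity of the line bundle $T_{\mathcal{X}/\mathcal{B}}=K_{\mathcal{X}/\mathcal{B}}^{-1}$ to negativity of its Chern curvature form, identify that form with $-\omega_{\mathcal{X}}$ via $\phi=\log\det(g_{i\bar j})=\log\phi_{1\bar 1}$, and obtain positivity of $\omega_{\mathcal{X}}$ from Schumacher's theorem together with effective parametrization, then conclude by Theorem \ref{thm3}. The paper states this more tersely (as negativity of the Ricci curvature of $(T_{\mathcal{X}/\mathcal{B}},\omega_{\mathcal{X}/\mathcal{B}})$, which coincides with the full curvature in fiber dimension one), but the content is identical.
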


\begin{proof}
If each fiber has dimension one, then \( K_{\mathcal{X}/\mathcal{B}}^{-1} = T_{\mathcal{X}/\mathcal{B}} \), where \( K_{\mathcal{X}/\mathcal{B}}^{-1} \) denotes the anti-canonical line bundle, which is the dual of the relative canonical bundle \( K_{\mathcal{X}/\mathcal{B}} \). By \cite[Theorem 1]{MR2969273}, the Ricci curvature of \( (T_{\mathcal{X}/\mathcal{B}}, \omega_{\mathcal{X}/\mathcal{B}}) \) is negative. Therefore, \( T_{\mathcal{X}/\mathcal{B}} \) is Griffiths negative since each fiber has dimension one. The proof is complete.
\end{proof}

\bibliographystyle{alpha}
\bibliography{negativity}

@article {MR2820140,
    AUTHOR = {To, Wing-Keung and Yeung, Sai-Kee},
     TITLE = {K\"ahler metrics of negative holomorphic bisectional curvature
              on {K}odaira surfaces},
   JOURNAL = {Bull. Lond. Math. Soc.},
  FJOURNAL = {Bulletin of the London Mathematical Society},
    VOLUME = {43},
      YEAR = {2011},
    NUMBER = {3},
     PAGES = {507--512},
      ISSN = {0024-6093,1469-2120},
   MRCLASS = {32Q05 (32G15 32J15 32Q15)},
  MRNUMBER = {2820140},
MRREVIEWER = {V.\ V.\ Chueshev},
       DOI = {10.1112/blms/bdq117},
       URL = {https://doi.org/10.1112/blms/bdq117},
}

@article {MR2969273,
    AUTHOR = {Schumacher, Georg},
     TITLE = {Positivity of relative canonical bundles and applications},
   JOURNAL = {Invent. Math.},
  FJOURNAL = {Inventiones Mathematicae},
    VOLUME = {190},
      YEAR = {2012},
    NUMBER = {1},
     PAGES = {1--56},
      ISSN = {0020-9910,1432-1297},
   MRCLASS = {32Q20 (14D20 32G08 32L10)},
  MRNUMBER = {2969273},
MRREVIEWER = {Carlo\ Giovanni\ Madonna},
       DOI = {10.1007/s00222-012-0374-7},
       URL = {https://doi.org/10.1007/s00222-012-0374-7},
}

@article {MR3715350,
    AUTHOR = {Tosatti, Valentino and Yang, Xiaokui},
     TITLE = {An extension of a theorem of {W}u-{Y}au},
   JOURNAL = {J. Differential Geom.},
  FJOURNAL = {Journal of Differential Geometry},
    VOLUME = {107},
      YEAR = {2017},
    NUMBER = {3},
     PAGES = {573--579},
      ISSN = {0022-040X,1945-743X},
   MRCLASS = {32Q15 (53C55)},
  MRNUMBER = {3715350},
MRREVIEWER = {Diego\ Matessi},
       DOI = {10.4310/jdg/1508551226},
       URL = {https://doi.org/10.4310/jdg/1508551226},
}

@inbook{Kodaira+1975+1511+1519,
url = {https://doi.org/10.1515/9781400869879-014},
title = {A Certain Type Of Irregular Algebraic Surfaces},
booktitle = {Kunihiko Kodaira, Volume III},
author = {K. Kodaira},
publisher = {Princeton University Press},
address = {Princeton},
pages = {1511--1519},
doi = {doi:10.1515/9781400869879-014},
isbn = {9781400869879},
year = {1975},
lastchecked = {2024-08-23}
}

@article {MR1019707,
    AUTHOR = {Tsai, I Hsun},
     TITLE = {Negatively curved metrics on {K}odaira surfaces},
   JOURNAL = {Math. Ann.},
  FJOURNAL = {Mathematische Annalen},
    VOLUME = {285},
      YEAR = {1989},
    NUMBER = {3},
     PAGES = {369--379},
      ISSN = {0025-5831,1432-1807},
   MRCLASS = {32L15 (32J15)},
  MRNUMBER = {1019707},
MRREVIEWER = {Hideaki\ Kazama},
       DOI = {10.1007/BF01455062},
       URL = {https://doi.org/10.1007/BF01455062},
}

@article {MR990192,
    AUTHOR = {Cheung, Chi-Keung},
     TITLE = {Hermitian metrics of negative holomorphic sectional curvature
              on some hyperbolic manifolds},
   JOURNAL = {Math. Z.},
  FJOURNAL = {Mathematische Zeitschrift},
    VOLUME = {201},
      YEAR = {1989},
    NUMBER = {1},
     PAGES = {105--119},
      ISSN = {0025-5874,1432-1823},
   MRCLASS = {32H20 (32L99 53C55)},
  MRNUMBER = {990192},
MRREVIEWER = {M.\ Kalka},
       DOI = {10.1007/BF01161998},
       URL = {https://doi.org/10.1007/BF01161998},
}

@mastersthesis{Tsui_2006,
title ={{Families of polarized abelian varieties and a construction of K\"ahler metrics of negative holomorphic bisectional curvature on Kodaira surfaces}},
author ={Ho-Yu Tsui},
keywords ={Abelian varieties; Kählerian structures; Manifolds (Mathematics)},
journal ={香港大學學位論文},
number ={2006年},
year ={2006},
month ={Jan},
pages ={1-0},
language ={英文},
school ={The University of Hong Kong},
}

@article {MR4631050,
    AUTHOR = {Wan, Xueyuan and Wang, Xu},
     TITLE = {Curvature of the base manifold of a {M}onge-{A}mp\`ere
              fibration and its existence},
   JOURNAL = {Math. Ann.},
  FJOURNAL = {Mathematische Annalen},
    VOLUME = {387},
      YEAR = {2023},
    NUMBER = {1-2},
     PAGES = {353--387},
      ISSN = {0025-5831,1432-1807},
   MRCLASS = {32Q05 (32G05 53C55)},
  MRNUMBER = {4631050},
       DOI = {10.1007/s00208-022-02475-9},
       URL = {https://doi.org/10.1007/s00208-022-02475-9},
}

@article {MR3955542,
    AUTHOR = {Feng, Huitao and Liu, Kefeng and Wan, Xueyuan},
     TITLE = {Geodesic-{E}instein metrics and nonlinear stabilities},
   JOURNAL = {Trans. Amer. Math. Soc.},
  FJOURNAL = {Transactions of the American Mathematical Society},
    VOLUME = {371},
      YEAR = {2019},
    NUMBER = {11},
     PAGES = {8029--8049},
      ISSN = {0002-9947,1088-6850},
   MRCLASS = {53C55 (53C60)},
  MRNUMBER = {3955542},
MRREVIEWER = {Chunping\ Zhong},
       DOI = {10.1090/tran/7658},
       URL = {https://doi.org/10.1090/tran/7658},
}

@article {MR3489705,
    AUTHOR = {Wu, Damin and Yau, Shing-Tung},
     TITLE = {Negative holomorphic curvature and positive canonical bundle},
   JOURNAL = {Invent. Math.},
  FJOURNAL = {Inventiones Mathematicae},
    VOLUME = {204},
      YEAR = {2016},
    NUMBER = {2},
     PAGES = {595--604},
      ISSN = {0020-9910,1432-1297},
   MRCLASS = {32Q15 (14J40 53C56)},
  MRNUMBER = {3489705},
MRREVIEWER = {Florian\ Schrack},
       DOI = {10.1007/s00222-015-0621-9},
       URL = {https://doi.org/10.1007/s00222-015-0621-9},
}

@book{Kobayashi+1987,
url = {https://doi.org/10.1515/9781400858682},
title = {Differential Geometry of Complex Vector Bundles},
author = {Shoshichi Kobayashi},
publisher = {Princeton University Press},
address = {Princeton},
doi = {doi:10.1515/9781400858682},
isbn = {9781400858682},
year = {1987},
lastchecked = {2022-11-30}
}

@article {MR227901,
    AUTHOR = {Goldberg, Samuel I. and Kobayashi, Shoshichi},
     TITLE = {Holomorphic bisectional curvature},
   JOURNAL = {J. Differential Geometry},
  FJOURNAL = {Journal of Differential Geometry},
    VOLUME = {1},
      YEAR = {1967},
     PAGES = {225--233},
      ISSN = {0022-040X,1945-743X},
   MRCLASS = {53.80},
  MRNUMBER = {227901},
MRREVIEWER = {K.\ T.\ Hahn},
       URL = {http://projecteuclid.org/euclid.jdg/1214428090},
}

@article {MR436056,
    AUTHOR = {Yang, Paul C.},
     TITLE = {K\"ahler metrics on fibered manifolds},
   JOURNAL = {Proc. Amer. Math. Soc.},
  FJOURNAL = {Proceedings of the American Mathematical Society},
    VOLUME = {63},
      YEAR = {1977},
    NUMBER = {1},
     PAGES = {131--133},
      ISSN = {0002-9939,1088-6826},
   MRCLASS = {53C55 (32J15)},
  MRNUMBER = {436056},
MRREVIEWER = {A.\ L.\ Vitter, III},
       DOI = {10.2307/2041081},
       URL = {https://doi.org/10.2307/2041081},
}

@article {MR4100006,
    AUTHOR = {Chaturvedi, Ananya and Heier, Gordon},
     TITLE = {Hermitian metrics of positive holomorphic sectional curvature
              on fibrations},
   JOURNAL = {Math. Z.},
  FJOURNAL = {Mathematische Zeitschrift},
    VOLUME = {295},
      YEAR = {2020},
    NUMBER = {1-2},
     PAGES = {349--364},
      ISSN = {0025-5874,1432-1823},
   MRCLASS = {14D06 (32L05 32Q10 53C55)},
  MRNUMBER = {4100006},
MRREVIEWER = {Song-Yan\ Xie},
       DOI = {10.1007/s00209-019-02341-6},
       URL = {https://doi.org/10.1007/s00209-019-02341-6},
}

@article {MR4852033,
    AUTHOR = {Li, Chi},
     TITLE = {K\"ahler structures for holomorphic submersions},
   JOURNAL = {Pure Appl. Math. Q.},
  FJOURNAL = {Pure and Applied Mathematics Quarterly},
    VOLUME = {21},
      YEAR = {2025},
    NUMBER = {3},
     PAGES = {1245--1268},
      ISSN = {1558-8599,1558-8602},
   MRCLASS = {53C55 (32J27 53D05)},
  MRNUMBER = {4852033},
       DOI = {10.4310/pamq.250116021958},
       URL = {https://doi.org/10.4310/pamq.250116021958},
}

@article {MR4033928,
    AUTHOR = {Yang, Bo and Zheng, Fangyang},
     TITLE = {Hirzebruch manifolds and positive holomorphic sectional
              curvature},
   JOURNAL = {Ann. Inst. Fourier (Grenoble)},
  FJOURNAL = {Universit\'e{} de Grenoble. Annales de l'Institut Fourier},
    VOLUME = {69},
      YEAR = {2019},
    NUMBER = {6},
     PAGES = {2589--2634},
      ISSN = {0373-0956,1777-5310},
   MRCLASS = {53C55 (32Q05 32Q15)},
  MRNUMBER = {4033928},
MRREVIEWER = {Kai\ Tang},
       DOI = {10.5802/aif.3303},
       URL = {https://doi.org/10.5802/aif.3303},
}

@incollection {MR859202,
    AUTHOR = {Siu, Yum Tong},
     TITLE = {Curvature of the {W}eil-{P}etersson metric in the moduli space
              of compact {K}\"ahler-{E}instein manifolds of negative first
              {C}hern class},
 BOOKTITLE = {Contributions to several complex variables},
    SERIES = {Aspects Math.},
    VOLUME = {E9},
     PAGES = {261--298},
 PUBLISHER = {Friedr. Vieweg, Braunschweig},
      YEAR = {1986},
      ISBN = {3-528-08964-4},
   MRCLASS = {32G15 (53C25 53C55 58E20)},
  MRNUMBER = {859202},
MRREVIEWER = {Akito\ Futaki},
}

\end{document}